\newtheorem{thm}{Theorem}[section]
\newtheorem{cor}[thm]{Corollary}
\newtheorem{lem}[thm]{Lemma}
\theoremstyle{definition}
\newtheorem{def1}[thm]{Definition}
\newcommand{\ra}{\rightarrow}
\newcommand{\bk}{\backslash}
\newcommand{\mc}{\mathcal}
\newcommand{\mb}{\mathbb}
\newcommand{\sg}{\sigma}
\newcommand{\llf}{\left\lfloor}
\newcommand{\e}{\epsilon}
\newcommand{\rrf}{\right\rfloor}
\newcommand{\mbf}{\boldsymbol}
\begin{document}
\title[Restricted Prime Factors II]{On the Number of Restricted Prime Factors of an Integer II}
\normalsize
\author[A. P. Mangerel]{Alexander P. Mangerel}
\address{Department of Mathematics\\ University of Toronto\\
Toronto, Ontario, Canada}
\email{sacha.mangerel@mail.utoronto.ca}
\maketitle
\begin{abstract}
%Given an arithmetic function $g(n)$ write $M_g(x) := \sum_{n \leq x} g(n)$. We extend and strengthen the results of a fundamental paper of Hal\'{a}sz in several ways by proving upper bounds for the ratio of $\frac{|M_g(x)|}{M_{|g|}(x)}$, for any strongly multiplicative, complex-valued function $g(n)$ under certain assumptions on the sequence $\{g(p)\}_p$. We further prove an asymptotic formula for this ratio in the case that $|\text{arg}(g(p))|$ is sufficiently small uniformly in $p$. In so doing, we recover a new proof of an explicit lower mean value estimate for $M_{f}(x)$ for any non-negative, multiplicative function satisfying $c_1 \leq |f(p)| \leq c_2$ for $c_2 \geq c_1 > 0$, by relating it to $\frac{x}{\log x}\prod_{p \leq x} \left(1+\frac{f(p)}{p}\right)$. As a first application of our main theorems, we extend the results of a previous paper of the author. 
Given a partition $\{E_0,\ldots,E_n\}$ of the set of primes and a vector $\mbf{k} \in \mb{N}_0^{n+1}$, we compute an asymptotic formula for the quantity $|\{m \leq x: \omega_{E_j}(m) = k_j \ \forall \ 0 \leq j \leq n\}|$ uniformly in a wide range of the parameters $k_j$ that complements the results of a previous paper of the author \cite{prev}. This is accomplished using an extension and generalization of a theorem of Wirsing due to the author \cite{WirExt} that gives explicit estimates for the ratio $\frac{|M_g(x)|}{M_{f}(x)}$, whenever $f: \mb{N} \ra (0,\infty)$ and $g: \mb{N} \ra \mb{C}$ are strongly multiplicative functions that are uniformly bounded on primes and satisfy $|g(n)| \leq f(n)$ for every $n \in \mb{N}$. This also allows us to conclude the validity of a probabilistic heuristic set forth in \cite{prev} regarding $\pi(x;\mbf{E},\mbf{k})$ in a specific range of values of $k_j$, for each $0 \leq j \leq n$. 
%Our final application is to exponential sums with multiplicative functions supported on squarefree integers, addressing a problem due to G. Bachman.
\end{abstract}
\section{Introduction}
This is the second paper in a series in which we consider the following problem. Suppose $E_0,\ldots,E_n$ is a finite family of disjoint subsets of the primes and let $\mbf{k} \in \mb{N}_0^{n+1}$. We wish to determine 
\begin{equation} \label{GOAL}
\pi(x;\mbf{E},\mbf{k}) := |\{m \leq x: \omega_{E_j}(m) = k_j \ \forall 0 \leq j \leq n\}|,
\end{equation}
where, for each set $E_j$, $\omega_{E_j}(m)$ is the number of distinct primes from $E_j$ that divide $n$. Given $x \geq 3$ and a set of primes $E$, put
\begin{equation*}
E(x) := \sum_{p \leq x \atop p \in E} \frac{1}{p}.
\end{equation*}
A basic argument shows that $E(x)$ is the mean value of $\omega_{E}(n)$ for $n \leq x$, i.e., $x^{-1}\sum_{n \leq x} \omega_{E}(n) = \left(1+o(1)\right)E(x)$. We will assume throughout this paper that, in the context of \eqref{GOAL}, $E_j(x)$ tends to infinity with $x$ for each $j$. \\
In \cite{prev}, we proved an asymptotic formula for \eqref{GOAL} under the assumptions that: i) the set $\{E_j\}_{0 \leq j \leq n}$ partitions the primes; ii) for each $j$, $E_j(x)^{2} \ll k_j \ll \log^{\frac{2}{3}-\e} x$, and some mild hypotheses on the distribution of \emph{some} element of the partition.
%(In fact, our results give a parametrization of an error term which is smaller than the main term whenever $k_j \gg E_j(x)^{1+\e}$ for each $j$, as is noted in the remark following the statement of Theorem 2.1 there.)  
In spite of probabilistic heuristics proposed in \cite{prev} that suggested that $x^{-1}\pi(x;\mbf{E},\mbf{k})$ should asymptotically approximate the probability density function of a Poisson distributed random variable, in which case 
\begin{equation*}
x^{-1}\pi(x;\mbf{E},\mbf{k}) \text{ is asymptotically equal to } \prod_{j=0}^n \frac{E_j(x)^{k_j}}{k_j!}e^{-E_j(x)}, 
\end{equation*}
we proved a substantial bias away from this behaviour, at least in the regime given in ii). The purpose of this paper is to consider the same problem of estimating \eqref{GOAL}, again under assumption i), but in a range of values of $k_j$ with order of magnitude less than or equal to that of the asymptotic mean $E_j(x)$. We would like to determine whether or not the Poisson heuristic just described is accurate in this range of $k_j$. We emphasize here that the methods we use in this paper to treat this problem are completely different from and independent of those in \cite{prev}, and the arguments in this article can be read without reference to \cite{prev}.\\
Considering the analogous question for $m = 1$ (hence $E_1$ is the set of all primes, which we denote by $\mc{P}$) and writing $E_1(x)$ as $P(x)$, it is clear from results of Landau and Selberg, respectively (see the references for \cite{prev}) that when $k = o\left(P(x)\right)$, $x^{-1}\pi(x;\mc{P},k)$ is not asymptotically Poisson distributed,  while for $k = \left(1+o(1)\right)P(x)$, $x^{-1}\pi(x;\mc{P},k)$, it is. Indeed, denoting $\rho := \frac{k}{P(x)}$, one has
\begin{align} 
x^{-1}\pi(x;\mc{P},k) &= (1+o(1))\rho\frac{P(x)^k}{k!}e^{-P(x)} \label{Small},
%(1+o(1))\frac{\log_2^{k-1} x}{(k-1)!\log x} = 
%x^{-1}\pi(x;\mc{P},k) &= \left(1+o(1)\right)\frac{P(x)^k}{k!}e^{-P(x)} \label{SmallDev},
%\left(1+O\left(\frac{1}{\log_2 x}\right)\right)\frac{1}{\Gamma(\rho)} \prod_{p} \left(1+\frac{\rho}{p-1}\right)\left(1-\frac{1}{p}\right)^{\rho} \frac{\log_2^k x}{k!\log x} 
\end{align}
%this last identity following because $\Gamma(1+o(1)) = 1+o(1)$, and
%\begin{equation*}
%\left(\frac{p-1+\rho}{p-1}\right)\left(\frac{p-1}{p}\right)^{\rho} = \left(1+\frac{\rho-1}{p}\right)\left(1-\frac{1}{p}\right)^{\rho-1} = 1+O\left(\frac{(\rho-1)^2}{p^2}\right),
%\end{equation*}
%whence the product is $\exp\left(\sum_p \log(1+O\left(\frac{(\rho-1)^2}{p^{2}}\right))\right) = \left(1+O\left((\rho-1)^2\right)\right)$.  
for $x \geq 2$, where $\log_2 x = \log(\log x)$. The factor biasing $x^{-1}\pi(x;\mc{P},k)$ in \eqref{Small} is therefore $\frac{k}{P(x)}$, and clearly when $\rho = 1+o(1)$ the Poisson heuristic is valid. In analogy, Hal\'{a}sz \cite{Hal2} showed that in the case of a single set $E \subseteq \mc{P}$ and in the case $k = (1+o(1))E(x)$, $x^{-1}\pi(x;E,k)$ is also Poisson distributed.  
%In this paper, we prove the following results relating to this question in b as consequences of our main theorems.
In this paper, we extend Hal\'{a}sz' result to a partition of multiple sets, i.e., in the case where $k_j = \left(1+o(1)\right)E_j(x)$ for each $j$, $x^{-1}\pi(x;\mbf{E},\mbf{k})$ is Poisson distributed (see Corollary \ref{CORWIRSING}). We will also prove asymptotic formulae for $\pi(x;\mbf{E},\mbf{k})$ for larger ranges of values of $k_j$, in each case attempting to show similar distributional behaviour to that implicit in \eqref{Small}.  \\
For a set of primes $E$, an arithmetic function $g : \mb{N} \ra \mb{C}$, $x,T > 0$ and $\tau \in \mb{R}$, write
\begin{align*}
D_E(x;g,\tau) &:= \sum_{p \leq x} \frac{1-\text{Re}(g(p)p^{-i\tau})}{p}, \\
\Delta_E(x;g,T) &:= \min_{|\tau| \leq T} D_E(x;g,\tau).
\end{align*}
\begin{def1}\label{NICE}
Let $g: \mb{N} \ra \mb{C}$ be an arithmetic function and let $\mbf{E} = (E_0,\ldots,E_n)$ be a partition of the primes. We say that the ordered pair $(g,\mbf{E})$ is a \emph{nice pair} if $\max_{0 \leq j \leq n} \Delta_{E_j}(x;g,T)/E_j(x) \gg_n 1$. \\ 
If, for each $j$, there is some $z_j \in \mb{C}$ such that $g(p) = z_j$ for each $p \in E_j$ then we will also write that $(\mbf{z},\mbf{E})$ is a nice pair. Finally, if $(\mbf{z},\mbf{E})$ is a nice pair for each $\mbf{z} \in \mb{C}$ then we will say that $\mbf{E}$ is a \emph{nice partition}.
\end{def1}
Note that the property that a partition is nice is specifically dependent on the distribution of the arguments of the complex numbers $z_j$ for each $j$. We employ this definition merely to avoid certain pathological sets $E_j$ that are constructed precisely to correlate with the argument of $g$ at its primes, e.g., $E_j := \{p : |\text{arg}(g(p)p^{-it})| < \e\}$, for $\e > 0$ chosen small.
\begin{def1} \label{GOOD}
We say that $\mbf{E}$ is a \emph{good partition} if, for each $1 \leq j \leq m$ there exists some $\lambda_j > 0$ such that the function $\zeta_{E_j}(s)-\lambda_j\log\left(1/(s-1)\right)$ is holomorphic in a neighbourhood of $s = 1$, for $\zeta_{E_j}(s) := \prod_{p \in E_j} \left(1-p^{-s}\right)^{-1}$.
\end{def1}
One of our main results (see also Theorem \ref{GOALTHM} below) in this paper is the following.
\begin{thm} \label{HalApp}
Let $x$ be sufficiently large and let $B = B(x) > 0$ be a non-decreasing function. For each $0 \leq j \leq n$, let $k_j \in \mb{N}$, $\delta_j(x) > 0$ be a non-increasing function of $x$, and set $\delta := \min_{0 \leq j \leq n} \delta_j$. Suppose that if $\rho_j := \frac{k_j}{E_j(x)}$ then we have $E_j(x)^{-1+\e} \ll \delta_j \leq \rho_j \leq B$ and $\frac{\delta}{B}\delta_jE_j(x) \ra \infty$ as $x \ra \infty$ for each $j$. For $\mbf{\rho} := (\rho_0,\ldots,\rho_n)$ put 
\begin{equation*}
f_{\mbf{\rho}}(m) := \prod_{0 \leq j \leq k} \rho_j^{\omega_{E_j}(m)}. 
\end{equation*}
Assume that $\mbf{E}$ is a nice partition. Then, provided the above assumptions hold, we have uniformly in $\delta_jE_j(x) \ll k_j \leq BE_j(x)$,
\begin{equation*}
\pi(x;\mbf{E},\mbf{k})  = \left(\sum_{m \leq x} f_{\mbf{\rho}}(m)\right)\prod_{0 \leq j \leq n} \frac{E_j(x)^{k_j}}{k_j!}e^{-k_j}\left(1+O_{B,n,\mbf{E}}\left(\left(\frac{B}{\delta}\right)^2\left(\left(\frac{\delta}{B}\delta_jE_j(x)\right)^{-\frac{1}{6}} + \log^{-\pi^3\delta/2} x\right)\right)\right). 
\end{equation*}
If $\mbf{E}$ is also a good partition then we have the sharper estimate
\begin{equation*}
\pi(x;\mbf{E},\mbf{k})  = \left(\sum_{m \leq x} f_{\mbf{\rho}}(m)\right)\prod_{0 \leq j \leq n} \frac{E_j(x)^{k_j}}{k_j!}e^{-k_j}\left(1+O_{B,n,\mbf{E}}\left(\left(\frac{B}{\delta}\right)^2\left(\left(\delta_j	E_j(x)\right)^{-\frac{1}{6}} + \log^{-\pi^3\delta/2} x\right)\right)\right).
\end{equation*}
%holding uniformly in $E_j(x)^{\e} \ll k_j \ll E_j(x)^{1-\e}$ for each $j$.
\end{thm}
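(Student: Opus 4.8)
The plan is to realise $\pi(x;\mbf{E},\mbf{k})$ as an $(n+1)$-fold contour integral and evaluate it by a multidimensional saddle-point analysis, with the Wirsing-type ratio estimates of \cite{WirExt} supplying the required information about the relevant mean values. Write $j(p)$ for the index with $p \in E_{j(p)}$, and for $\mbf{\theta} = (\theta_0,\dots,\theta_n) \in (-\pi,\pi]^{n+1}$ let $g_{\mbf{\theta}}$ be the strongly multiplicative function with $g_{\mbf{\theta}}(p) = \rho_{j(p)}e^{i\theta_{j(p)}}$, so that $|g_{\mbf{\theta}}(m)| = f_{\mbf{\rho}}(m)$ for all $m$; for multiplicative $h$ put $M_h(x) = \sum_{m \leq x} h(m)$. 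Detecting each constraint $\omega_{E_j}(m) = k_j$ by Cauchy's formula on the circle $|z_j| = \rho_j$ --- the saddle point of $r \mapsto r^{-k_j}e^{rE_j(x)}$ --- gives
\begin{equation*}
\pi(x;\mbf{E},\mbf{k}) = \frac{1}{(2\pi)^{n+1}}\left(\prod_{j=0}^n \rho_j^{-k_j}\right)\int_{(-\pi,\pi]^{n+1}} M_{g_{\mbf{\theta}}}(x)\prod_{j=0}^n e^{-ik_j\theta_j}\, d\mbf{\theta}.
\end{equation*}

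Next I would split the cube into a major box $\mc{M} = \{\mbf{\theta} : |\theta_j| \leq \eta_j \ \forall\, j\}$, with $\eta_j \asymp k_j^{-7/18}$ (balancing the cubic Taylor error $k_j\eta_j^3$ against the Gaussian tail $e^{-ck_j\eta_j^2}$), and its complement. On $\mc{M}$ I apply the ratio theorem of \cite{WirExt} with $f = f_{\mbf{\rho}}$ and $g = g_{\mbf{\theta}}$: since these agree in modulus it gives $M_{g_{\mbf{\theta}}}(x) = M_{f_{\mbf{\rho}}}(x)\bigl(\prod_{p\le x} \tfrac{p-1+g_{\mbf{\theta}}(p)}{p-1+f_{\mbf{\rho}}(p)} + \mc{E}(x)\bigr)$, with $\mc{E}(x)$ the explicit error term of \cite{WirExt} --- a negative power of $\log x$ with exponent proportional to $\delta$ --- uniform in $\mbf{\theta}$, and for $\mbf{\theta}\in\mc{M}$ the logarithm of the truncated Euler-product ratio equals $\sum_{p\le x} \tfrac{g_{\mbf{\theta}}(p)-f_{\mbf{\rho}}(p)}{p} + O(\cdots) = \sum_{j} k_j(e^{i\theta_j}-1) + O\bigl(\sum_j k_j|\theta_j|^3 + \cdots\bigr)$, using $\rho_jE_j(x) = k_j$. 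Since the tail of $\int e^{k_j(e^{i\theta_j}-1)}e^{-ik_j\theta_j}\,d\theta_j$ outside $|\theta_j|\le\eta_j$ is $O(e^{-ck_j\eta_j^2})$, I may restore the main term to the full cube and evaluate it: writing $e^{k_j(e^{i\theta_j}-1)} = e^{-k_j}\sum_{m\ge0}\tfrac{k_j^m}{m!}e^{im\theta_j}$ and picking out $m = k_j$ (legitimate as $k_j\in\mb{N}$) gives $\tfrac1{2\pi}\int_{-\pi}^{\pi}e^{k_j(e^{i\theta_j}-1)}e^{-ik_j\theta_j}\,d\theta_j = e^{-k_j}k_j^{k_j}/k_j!$, and since $\rho_j^{-k_j}k_j^{k_j} = E_j(x)^{k_j}$ this reproduces the leading term $\bigl(\sum_{m\le x} f_{\mbf{\rho}}(m)\bigr)\prod_j \tfrac{E_j(x)^{k_j}}{k_j!}e^{-k_j}$. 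The loss incurred on $\mc{M}$ --- from truncating the Taylor expansion of the Euler-product ratio, from $\mc{E}(x)$, and from Stirling-type comparisons performed uniformly over $\delta_jE_j(x) \le k_j \le BE_j(x)$ --- is what yields the factor $(B/\delta)^2$ together with the contributions $(\tfrac{\delta}{B}\delta_jE_j(x))^{-1/6}$ and $\log^{-\pi^3\delta/2}x$. If $\mbf{E}$ is also a good partition, Definition \ref{GOOD} lets one replace the Wirsing estimate on $\mc{M}$ by a Selberg--Delange expansion with holomorphic remainder near $s=1$; the major box can then be taken wider by a factor independent of $\delta/B$, which removes that factor from the first error term and gives the sharper bound.

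On the complement of $\mc{M}$ some coordinate $\theta_{j_0}$ satisfies $|\theta_{j_0}| > \eta_{j_0}$. Here I use that $\mbf{E}$ is a nice partition: applying Definition \ref{NICE} to $\mbf{z} = (\rho_0 e^{i\theta_0},\dots,\rho_n e^{i\theta_n})$ --- equivalently to the modulus-one function $g_{\mbf{\theta}}/f_{\mbf{\rho}}$ --- gives $\max_{0 \leq j \leq n}\Delta_{E_j}(x;g_{\mbf{\theta}},T) \gg_n E_j(x)$, where the maximising $j$ must be one of the coordinates with $|\theta_j|$ not small; feeding this into the Hal\'asz-type ratio bound contained in \cite{WirExt} yields $|M_{g_{\mbf{\theta}}}(x)| \ll M_{f_{\mbf{\rho}}}(x)\,S(\mbf{\theta})$ with $S(\mbf{\theta})$ a genuine saving that grows with $\theta_{j_0}^2 E_{j_0}(x)$, uniformly in $\mbf{\theta}$. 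Integrating this bound over the complement and dividing through by the leading term --- of size $\asymp M_{f_{\mbf{\rho}}}(x)\prod_j \rho_j^{-k_j}k_j^{-1/2}$ by Stirling --- one checks that the minor contribution is absorbed by the stated error, the ratio $\delta/B$ entering precisely through the need to bound the smallest relevant frequency against the product over all $j$.

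I expect this minor-range step to be the main obstacle. Definition \ref{NICE} is purely qualitative, and it must be upgraded to a decay estimate for $|M_{g_{\mbf{\theta}}}(x)|$ that is uniform in $\mbf{\theta}$ and strong enough to survive integration over a region of measure $\asymp 1$ --- which is exactly where the pathological partitions excluded by that definition would wreck the argument. The major-box analysis, though lengthy and genuinely multidimensional, is by comparison a routine saddle-point computation once the ratio input of \cite{WirExt} (or the Selberg--Delange expansion in the good case) is in hand; the residual difficulty there is careful bookkeeping of the parameters $k_j,\rho_j,\delta_j,B$ through the error terms.
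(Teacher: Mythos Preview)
Your approach is essentially that of the paper: Cauchy's formula on the circles $|z_j|=\rho_j$, a major-box/minor-arcs decomposition, Theorem~\ref{HalGen}~ii) on the box to extract the factor $A_{\mbf{z}}=\exp\bigl(\sum_j(z_j-\rho_j)E_j(x)\bigr)$ (whose Cauchy integral in $\mbf{z}$ gives $\prod_j E_j(x)^{k_j}e^{-k_j}/k_j!$ exactly, as you observe), the Hal\'asz-type upper bound of Theorem~\ref{HalGen}~i) on the complement, and finally an optimisation of the box widths. Your main-term computation and your reading of how the nice-partition hypothesis feeds the minor-arc bound are both in line with the paper.

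One point in your outline is misplaced, however. You attribute the sharpening for good partitions to replacing the Wirsing input on the \emph{major box} by a Selberg--Delange expansion. The paper does no such thing: the major-box analysis uses Theorem~\ref{HalGen}~ii) identically in both cases and leaves $M_{f_{\mbf{\rho}}}(x)$ unevaluated throughout the proof of Theorem~\ref{HalApp}. The improvement lives entirely on the \emph{minor arcs}, via Lemma~\ref{APPLIC}. When $\mbf{E}$ is good one can run the Hal\'asz bound componentwise and obtain a decay constant $D_j=\frac{\gamma_{0,j}}{2(1+\gamma_{0,j})}\gg\beta_j^3$ in each coordinate; when $\mbf{E}$ is merely nice one must fall back on the trivial one-set partition and pick up an extra factor $\delta/B$, so $D_j=\frac{\delta}{B}\cdot\frac{\gamma_0}{2(1+\gamma_0)}$. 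Since the optimal $\theta_j$ is a negative power of $D_j\rho_jE_j(x)$, this factor $\delta/B$ propagates directly into the term $(D_j\delta_jE_j(x))^{-1/6}$ of the final error. So the good-partition sharpening is a minor-arc phenomenon, not a major-arc one; your plan to widen the major box in the good case would not by itself produce the stated gain.
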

(The second estimate is sharper when the values of $\rho_j$ are substantially different, so that $\delta$ and $B$ may be forced to be quite dissimilar in order of magnitude.) \\
%(The factor $\frac{\delta}{B}$ which appears in the error term and the hypothesis $\frac{\delta}{B}\delta_jE_j(x)$ can be removed if we assume that the collection $\{E_0,\ldots, E_n\}$ is well-behaved in a certain sense, see Theorem 2.3 in \cite{WirExt}).\\
We discuss the proof of this result in the next section. We are able to show as a consequence that whenever the ratios $\rho_j$ in Theorem \ref{HalApp} are all sufficiently close to a common value $\rho$, then 
\begin{equation*}
\pi(x;\mbf{E},\mbf{k}) \sim \rho B(\mbf{\rho}) x\prod_{0 \leq j \leq n} \frac{E_j(x)^{k_j}}{k_j!}e^{-E_j(x)},
\end{equation*}
where $B(\mbf{\rho})$ is a bounded factor that approaches 1 as $\max_j|\rho_j-1| \ra 0$ (see Theorem \ref{GOALTHM}). When $\rho$ is sufficiently small, it turns out that we can actually give an asymptotic formula for $\pi(x;\mbf{E},\mbf{k})$ that is uniform in the range $E_j(x)^{\e} \ll k_j \ll E_j(x)^{1-\e}$ for each $j$ with this method. \\
The particular case of Hal\'{a}sz in which $\rho_j = 1+o(1)$ for each $j$, follows immediately, and is contained in the following.
\begin{cor}\label{CORWIRSING}
Suppose $\left|\frac{k_j}{E_j(x)}-1\right| \leq \eta$ for each $0 \leq j \leq n$, and that $\mbf{E}$ is a nice partition. Then
\begin{equation*}
\pi(x;\mbf{E},\mbf{k})  = x\prod_{0 \leq j \leq n} \frac{E_j(x)^{k_j}}{k_j!}e^{-E_j(x)}\left(1+O_{B,n}\left(E_j(x)^{-\frac{1}{6}} +\eta^{\frac{1}{2}} + e^{-\frac{1}{\sqrt{\eta}}}\right)\right). 
\end{equation*}
\end{cor}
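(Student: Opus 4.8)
The plan is to reduce the Corollary to an estimate for a single multiplicative sum via Theorem \ref{HalApp}, to evaluate that sum with an error that is explicit in $\eta$, and then to convert $e^{-k_j}$ into $e^{-E_j(x)}$ by telescoping. We may assume that $\eta$ is at most a small absolute constant, since otherwise $\eta^{1/2}\gg 1$ and the asserted formula reduces to the two-sided bound $\pi(x;\mbf{E},\mbf{k})\asymp_n x\prod_j\frac{E_j(x)^{k_j}}{k_j!}e^{-E_j(x)}$, which is standard for $k_j$ of order $E_j(x)$. Put $\rho_j:=k_j/E_j(x)$, so that $1-\eta\le\rho_j\le1+\eta$, and apply Theorem \ref{HalApp} with the constant choices $\delta_j\equiv\tfrac12$ and $B\equiv2$ (legitimate, being respectively non-increasing and non-decreasing). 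Then $\delta=\tfrac12$; the conditions $E_j(x)^{-1+\e}\ll\delta_j\le\rho_j\le B$ hold trivially for $x$ large (here $k_j\ge(1-\eta)E_j(x)\ge1$, so $f_{\mbf{\rho}}$ is positive); $\mbf{E}$ is a nice partition by hypothesis; and $\tfrac{\delta}{B}\delta_jE_j(x)=\tfrac1{16}E_j(x)\to\infty$ because $E_j(x)\to\infty$. Since $(B/\delta)^2\asymp1$, $\big(\tfrac{\delta}{B}\delta_jE_j(x)\big)^{-1/6}\asymp E_j(x)^{-1/6}$, and $\log^{-\pi^3\delta/2}x=\log^{-\pi^3/4}x\ll(\log_2x)^{-1/6}\ll E_j(x)^{-1/6}$ (using $E_j(x)\le\log_2x+O(1)$ and that $\pi^3/4$ is an absolute positive constant), the first estimate of Theorem \ref{HalApp} yields
\[
\pi(x;\mbf{E},\mbf{k})=\left(\sum_{m\le x}f_{\mbf{\rho}}(m)\right)\prod_{0\le j\le n}\frac{E_j(x)^{k_j}}{k_j!}\,e^{-k_j}\Big(1+O_{n}\big(E_j(x)^{-1/6}\big)\Big).
\]

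The crux is then to evaluate $M(x):=\sum_{m\le x}f_{\mbf{\rho}}(m)$, the summatory function of the positive strongly multiplicative function with $f_{\mbf{\rho}}(p)=\rho_j$ for $p\in E_j$ and each value lying in $[1-\eta,1+\eta]$. I would apply the quantitative extension of Wirsing's theorem from \cite{WirExt} to $f_{\mbf{\rho}}$, comparing it with the constant function $1$ (via a strongly multiplicative majorant built from $\max(1,\rho_j)$ when some $\rho_j<1$), and track how the error term of \cite{WirExt} depends on the proximity of $f_{\mbf{\rho}}$ to $1$ over the primes — including the resulting $1+O(\eta)$ deviation of the Wirsing constant from its value $1$ at $f\equiv1$ — to obtain
\[
M(x)=x\prod_{p\le x}\Big(1+\frac{f_{\mbf{\rho}}(p)-1}{p}\Big)\Big(1+O_{n}\big(\eta^{1/2}+e^{-1/\sqrt{\eta}}\big)\Big),
\]
the $x$-dependent part of the error from \cite{WirExt} being absorbed into the $E_j(x)^{-1/6}$ already present. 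A direct Euler-product estimate gives $\log\prod_{p\le x}\big(1+\tfrac{f_{\mbf{\rho}}(p)-1}{p}\big)=\sum_{0\le j\le n}\sum_{p\in E_j,\,p\le x}\big(\tfrac{\rho_j-1}{p}+O(\tfrac{\eta^2}{p^2})\big)=\sum_{0\le j\le n}(\rho_j-1)E_j(x)+O_n(\eta^2)$ since $\sum_pp^{-2}<\infty$, hence $\prod_{p\le x}\big(1+\tfrac{f_{\mbf{\rho}}(p)-1}{p}\big)=\exp\big(\sum_{0\le j\le n}(\rho_j-1)E_j(x)\big)\big(1+O_n(\eta^2)\big)$, and $\eta^2$ is absorbed into $\eta^{1/2}$.

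To finish, observe that $\sum_{0\le j\le n}(\rho_j-1)E_j(x)=\sum_{0\le j\le n}(k_j-E_j(x))$, so that $M(x)\prod_{0\le j\le n}e^{-k_j}=x\exp\big(\sum_{0\le j\le n}(k_j-E_j(x))-\sum_{0\le j\le n}k_j\big)\big(1+O_n(\eta^{1/2}+e^{-1/\sqrt\eta})\big)=x\prod_{0\le j\le n}e^{-E_j(x)}\big(1+O_n(\eta^{1/2}+e^{-1/\sqrt\eta})\big)$; substituting into the first display and multiplying the two $1+O(\cdot)$ factors gives the Corollary. The only substantive point is the evaluation of $M(x)$ in the second step — essentially the content of the later asymptotic formula Theorem \ref{GOALTHM} specialized to the common value $\rho=1$; there the term $e^{-1/\sqrt\eta}$ should emerge from optimizing an auxiliary parameter (a truncation depth, or a contour radius) in the Wirsing/Hal\'asz argument of \cite{WirExt}, balancing a contribution that grows against one that decays as the parameter varies, while $\eta^{1/2}$ records the loss of this argument relative to the sharp Selberg--Delange constant, whose recovery would instead demand the good-partition hypothesis of Definition \ref{GOOD}. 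Everything else is bookkeeping with the error terms supplied by Theorem \ref{HalApp}.
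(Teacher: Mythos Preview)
Your proposal is correct and follows essentially the same route as the paper: apply Theorem \ref{HalApp} with $\delta,B$ close to $1$ to reduce to $M_{f_{\mbf{\rho}}}(x)$, then invoke Theorem \ref{WIRSINGEXT} ii) from \cite{WirExt} with $f\equiv 1$ and $g=f_{\mbf{\rho}}$ to evaluate that sum (this is precisely where the $\eta^{1/2}+e^{-1/\sqrt{\eta}}$ terms enter), and finally collapse $e^{-\sum_j k_j}\exp\big(\sum_j(\rho_j-1)E_j(x)\big)$ to $e^{-\sum_j E_j(x)}$. Your write-up is in fact more explicit than the paper's one-paragraph justification, and you correctly flag the $|g|\le f$ majorant issue that the paper leaves implicit.
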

As is the case with Hal\'{a}sz' theorem in \cite{Hal2} mentioned above, Theorem \ref{HalApp} and its corollaries are consequences of a more general analysis that finds its roots in the study of mean values of multiplicative functions, a classical subject in multiplicative number theory.  Let $g : \mb{N} \ra \mb{C}$ be an arithmetic function. Put 
\begin{equation*}
M_g(x) := \sum_{n \leq x} g(n). 
\end{equation*}
Suppose $g$ is a \emph{completely multiplicative} function, i.e., $g(mn) = g(m)g(n)$ for all $m,n \in \mb{N}$, with the following properties. First, assume there exists $\delta > 0$ with $\delta \leq |g(p)| \leq 2-\delta$ for all primes $p$. Second, suppose that there exists an angle $\theta \in [-\pi,\pi)$ and $\beta > 0$ for which $|\text{arg}(g(p))-\theta| \geq \beta$, where here and throughout this paper, $\text{arg}(z)$ is taken to be the branch of argument with cut line along the negative real axis.  Then Hal\'{a}sz proved the existence of a constant $c = c(\delta,\beta) > 0$ such that
\begin{equation}
x^{-1}M_g(x) \ll_{\delta,\theta,\beta} \exp\left(\sum_{p \leq x} \frac{|g(p)|-1}{p} - c\sum_{p \leq x} \frac{|g(p)|-\text{Re}(g(p))}{p}\right) \label{HALLLL}.
\end{equation}
In the specific situation in which $|g(p)-1|$ is uniformly small over all primes $p$ in a precise sense then
% \leq \eta$ for some $\eta > 0$ for each prime $p$ (which is a special case of the above when $\delta \leq 1-\eta$ and $\theta$ is such that $|\theta| \geq \eta$, and $\beta = \eta$), he proved additionally the asymptotic formula
\begin{equation}
M_g(x) \sim x\exp\left(\sum_{p \leq x} \frac{g(p)-1}{p}\right) =: Ax \label{HALLL}
%+ O_{\delta,\eta}\left(|A|\eta x+xe^{\sum_{p \leq x} \frac{|g(p)|-1}{p}}\left(e^{-\frac{c_1}{\eta}} + \log^{-c_2}x\right)\right), 
\end{equation}
%where $A := $, and $c_1,c_2 > 0$ depend on $\delta$.  
Hal\'{a}sz used this last result to compute $x^{-1}\Pi(x;E,k)$ for $k = (1+o(1))E(x)$ (as mentioned above, and elaborated upon below), where $\Pi(x;E,k)$ is defined analogously to $\pi(x;E,k)$, but where $\omega_E(n)$ is replaced by $\Omega_E(n)$, the count, \emph{with multiplicity}, of the number of prime divisors from $E$ of an integer $n$. \\ %essentially implies that completely multiplicative functions $g$ whose Dirichlet series are sufficiently close to the $\zeta$ function for $s = \sg + i\tau$ with $\sg > 1$ and $|\tau|$ small will possess a mean value. 
%It should be emphasized that the first result just mentioned is presented in \cite{Ell2} with $c$ a constant multiple of $\delta \beta^3$. In particular, the upper bound is less advantageous in cases where $\delta$ is small.\\
%Thus far, all of the results that we have mentioned from the literature compare the summatory function $M_g(x)$ with $x$ (or rather, essentially  $M_1(x)$), and in \eqref{HALLL}, the Dirichlet series $G(s) := \sum_{n \geq 1} g(n)n^{-s}$ is compared with $\zeta(s)$.  A better standard of comparison for $G(s)$ than $\zeta(s)$, demonstrating similar nuances in the distribution of its coefficients and highlighting, instead, the effect of cancellation due to the arguments $\text{arg}(g(p))$ on the sum $M_g(x)$, is $\mc{G}(s) := \sum_{n \geq 1} |g(n)|n^{-s}$. %This idea is elaborated upon in the next section.   
%As far as is known to the author, there are no results comparing $M_g(x)$ and $M_{|g|}(x)$ (an improvement to the triangle inequality, if you will) found in the literature in the context of effective mean value estimates.  \\
Recall that an arithmetic function $g$ is \emph{strongly multiplicative} if: i) whenever $(m,n) = 1$, $g(mn) = g(m)g(n)$, and ii) $g(p^k) = g(p)$ for every $k \in \mb{N}$.  In \cite{WirExt} we proved an extension and generalization of Hal\'{a}sz' theorem with applications to (among other things) strongly multiplicative functions, which has the following weaker consequence. 
\begin{thm}[Theorem 2.2 in \cite{WirExt}] \label{HalGen}
i) Let $x \geq 3$ and $B>0$. Suppose $g(n)$ is a strongly multiplicative function, and let $\{E_1,\ldots,E_m\}$ be a partition of the set of primes such that for each $1 \leq j \leq m$ there are non-increasing functions $\delta_j = \delta_j(x) > 0$ and non-decreasing functions $B_j = B_j(x) > 0$ such that $\delta_j \leq |g(p)| \leq B_j \leq B$ uniformly over $p \in E_j$. Assume moreover that there are fixed $\phi_j \in [-\pi,\pi]$ and $\beta_j \in (0,\pi]$ such that for each $p \in E_j$, $|\arg(g(p))-\phi_j| \geq \beta_j$. Set $\gamma_{0,j} := \frac{27\pi\delta_j}{1024B_j}\beta_j^3$ and $\delta := \min_{1 \leq j \leq m} \delta_j$. Then for $T = \log^2 x$,
\begin{equation} \label{UPPER}
M_g(x)/M_{|g|}(x) \ll_{B,m,\mbf{\phi},\mbf{\beta}} \frac{B^2}{\delta}\exp\left(-\sum_{1 \leq j \leq m}B_j \left(\Delta_{E_j}(x;\tilde{g},T) - \sum_{p \leq x \atop p \in E_j} \frac{1-|\tilde{g}(p)|}{p}\right)\right),
\end{equation}
where for each $p \in E_j$, $\tilde{g}(p) := g(p)/B_j$. \\
If $\mbf{E}$ is a good partition and $\max_{1 \leq j \leq m} \Delta_{E_j}(x;g,T) \geq C\log_3 x$ for some $C > 0$ then, in fact,
\begin{equation*}
M_g(x)/M_{|g|}(x) \ll_{B,m,\mbf{\phi},C} \frac{B^2}{\delta}\exp\left(-\sum_{1 \leq j \leq m} c_j\sum_{p \in E_j \atop p \leq x} \frac{g(p)-|g(p)|}{p}\right),
\end{equation*}
where $c_j \geq \min_j\frac{\delta_j}{2B_j}\frac{\gamma_{0,j}}{\gamma_{0,j}+1}$. \\
%\begin{equation*}
%c_j := \begin{cases} \min\left\{\frac{1}{4m^2},\frac{\gamma_{0,j}}{1+\gamma_{0,j}}\right\} &\text{ if $(g,E)$ is reasonable and non-decreasing} \\
%\min_j\left\{\frac{\delta_j}{B_j}\right\}\min\left\{\frac{1}{4m^2},\frac{\gamma_{0,j}}{1+\gamma_{0,j}}\right\} &\text{ if $(g,E)$ is reasonable but not non-decreasing} \\
%\frac{\delta}{B}\frac{\gamma_{0,j}}{1+\gamma_{0,j}} &\text{ if $(g,E)$ is not reasonable but non-decreasing} \\
%\delta\frac{\gamma_{0,j}}{1+\gamma_{0,j}} &\text{ otherwise}.\end{cases}
%\end{equation*}
ii) Suppose additionally that for each $j$ there is a number $\eta_j = \eta_j(x) \in (0,1)$ such that $|\text{arg}(g(p))| \leq \eta_j$ uniformly over $p \in E_j$. Set $A := \exp\left(\sum_{p \leq x} \frac{g(p)-|g(p)|}{p}\right)$, and let $\eta := \max_{1 \leq j \leq m} \eta_j$. Then
\begin{equation} \label{ASYMP}
%\sum_{n \leq x} g(n) = \sum_{n \leq x} |g(n)|\left(A+O_{B,m,\mbf{\phi},\mbf{\beta}} \left(\frac{B^2}{\delta}\left(\frac{1}{\kappa}\left(|A|\eta^{\frac{1}{2}} + |A|^{\frac{\gamma_0}{4(1+\gamma_0)}}\left(\log^{-\frac{\gamma}{2}} x + \gamma^{-1}e^{-\frac{1}{\sqrt{\eta}}}\right)\right)\right)\right)\right).
M_g(x) = M_{|g|}(x)\left(A+O_{B,m}\left(\mc{R}\right)\right),
\end{equation}
where, for $d_1 := \delta^{-1}\sqrt{\eta}$ if $B\eta \leq \delta \leq \eta^{\frac{1}{2}}$ and $d_1 := 1$ otherwise,
\begin{equation*}
\mc{R} := \frac{B^2}{\delta}\left(\eta^{\frac{1}{2}}|A|\left(d_1 + \log^{-\frac{\delta}{3}} x + \delta^{-\frac{1}{2}}e^{-\frac{d_1}{\sqrt{\eta}}}\right) + |A|^{\frac{\gamma_0}{4(1+\gamma_0)}}\left(\log^{-\frac{\delta\beta^3}{2}} x + \delta^{-1}e^{-\frac{d_1}{\sqrt{\eta}}}\right)\right).
\end{equation*}
%where $\kappa > 0$ is sufficiently small so that $\sum_{t^{\kappa} < p \leq t} \frac{|g(p)|}{p}$ is smaller than a fixed constant (depending at most on $B$).
%Finally, the conditions above both also hold in case $f$ is completely multiplicative, provided that $|f(p)| < 2$ as well.
\end{thm}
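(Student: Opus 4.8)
The plan is to derive all three parts from a single Halász-type analysis of the Dirichlet series attached to $g$, performed blockwise over the partition $\mbf{E}$, with the good-partition refinement and the asymptotic formula in (ii) obtained by feeding in, respectively, extra analytic regularity of the $\zeta_{E_j}$ and the near-triviality of $\arg g$. First I would reduce to the unimodular case: since $f:=|g|$ is itself strongly multiplicative with $\delta\le f(p)\le B$, the rule $h(p):=g(p)/|g(p)|$ extends strongly multiplicatively with $|h|\equiv 1$ and $g=f\cdot h$ (the identity holding on primes, hence everywhere by strong multiplicativity), so $M_g(x)/M_{|g|}(x)=M_{fh}(x)/M_f(x)$; and a standard two-sided estimate for mean values of non-negative multiplicative functions gives $M_f(x)\asymp_{B,\delta}x\,\mathcal{E}_f(x)$ with $\mathcal{E}_f(x):=(\log x)^{-1}\exp\!\big(\sum_{p\le x}f(p)/p\big)$. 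It then suffices to estimate $M_{fh}(x)/(x\,\mathcal{E}_f(x))$ and divide by the lower bound for $M_f$.

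For part (i) I would apply the Halász--Montgomery inequality to $F(s):=\sum_n f(n)h(n)n^{-s}$, bounding $x^{-1}|M_{fh}(x)|$ by $\mathcal{E}_f(x)$ times $(1+\mathcal{U})e^{-\mathcal{U}}$ up to a negligible additive term, where $\mathcal{U}$ records how far $|F(1+\tfrac1{\log x}+it)|$, $|t|\le T$, falls below the size $\exp(\sum_{p\le x}f(p)/p)$ forced by $f$; expanding the Euler product and invoking Mertens shows $\mathcal{U}$ has the shape $\min_{|t|\le T}\sum_{p\le x}\tfrac{f(p)-\text{Re}(g(p)p^{-it})}{p}=\min_{|t|\le T}\sum_{p\le x}\tfrac{f(p)(1-\text{Re}(h(p)p^{-it}))}{p}$. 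Bounding this single minimum from below by the sum of the blockwise minima $\sum_j\Delta_{E_j}(x;\tilde g,T)$ (a deliberate loss, convenient when the $E_j$ have unrelated pretentious directions), and noting that the $B_j$ cancel --- indeed $B_j\big(\Delta_{E_j}(x;\tilde g,T)-\sum_{p\in E_j}\tfrac{1-|\tilde g(p)|}{p}\big)=\min_{|t|\le T}\sum_{p\in E_j}\tfrac{f(p)-\text{Re}(g(p)p^{-it})}{p}$ --- yields the first bound of (i), with the prefactor $(1+\mathcal{U})$ and the $T=\log^2x$ dependence absorbed into $\tfrac{B^2}{\delta}$ and the implied constant via $(1+\mathcal{U})e^{-\mathcal{U}}\ll_\e e^{-(1-\e)\mathcal{U}}$. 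For the sharper bound I would use that the good-partition hypothesis --- $\zeta_{E_j}(s)-\lambda_j\log\tfrac1{s-1}$ holomorphic near $s=1$ --- pins the block Euler products near the $1$-line precisely enough (growth $(\log x)^{\lambda_j}$, no spurious near-poles off the real line) to run Halász's large-values lemma with an explicit gain; the constant $\gamma_{0,j}=\tfrac{27\pi\delta_j}{1024B_j}\beta_j^3$ falls out of the elementary lower bound for $1-\text{Re}(e^{i\psi}p^{-it})$ when $\psi=\arg g(p)$ is kept a distance $\ge\beta_j$ from $\phi_j$ (the cubic loss in $\beta_j$ coming from combining $1-\cos\theta\gg\theta^2$ with the optimisation over $t$), while $\max_j\Delta_{E_j}(x;g,T)\ge C\log_3x$ guarantees the dominant contribution sits off the pole, so that $-\sum_j c_j\sum_{p\in E_j}\tfrac{g(p)-|g(p)|}{p}$ is the correct exponent with $c_j\ge\min_j\tfrac{\delta_j}{2B_j}\cdot\tfrac{\gamma_{0,j}}{\gamma_{0,j}+1}$.

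For the asymptotic in (ii), the new input is that $|\arg g(p)|\le\eta_j$ forces the minimiser $t$ of $\Delta_{E_j}$ near $0$, so $h$ pretends to be the constant $1$ rather than some $n^{i\tau}$ with $\tau\ne0$; then the Euler quotient $F(s)/F_f(s)=\prod_p\frac{p^s-1+g(p)}{p^s-1+f(p)}$ equals, near $s=1$, $A$ times an absolutely convergent correction differing from $1$ by $O_B(\eta)$, every local log-factor carrying a factor $|g(p)-f(p)|=f(p)|h(p)-1|\ll B\eta$. Concretely I would write $M_{fh}(x)=\sum_d b(d)M_f(x/d)$ with $b$ the Dirichlet coefficients of $F/F_f$, truncate the sum, extract the main term $M_f(x)\cdot A$ from the diagonal using $M_f(y)\asymp y\,\mathcal{E}_f(y)$ and partial summation, and bound the truncation tail and the oscillatory remainder by the upper bound of part (i) applied to the strongly multiplicative function obtained from $g$ after stripping its near-trivial main part --- whose pretentious distance is now \emph{large}, which is precisely the source of the factors $|A|^{\gamma_0/(4(1+\gamma_0))}$, $\log^{-\delta\beta^3/2}x$ and $e^{-d_1/\sqrt\eta}$ in $\mathcal{R}$. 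I expect the heart of the matter, and the heaviest bookkeeping, to be part (ii): the dichotomy defining $d_1$ (whether or not $B\eta\le\delta\le\eta^{1/2}$) is exactly the regime in which the convolution main term and the Halász error are of comparable size, so one must carry the dependence on $\delta,\eta,B$ uniformly through this balance and confirm $\mathcal{R}=o(|A|)$ whenever $|A|$ is not too small; by contrast, once the reduction of the first paragraph and the Halász inequality are in hand, the remaining work in (i) is comparatively routine.
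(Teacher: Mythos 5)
You should first note that the paper under review does not actually prove this theorem: it is imported verbatim as Theorem 2.2 of the companion paper \cite{WirExt}, so there is no in-text argument to measure your proposal against. Judged on its own terms, your outline follows the route one would expect (and the one the companion work is built on): factor $g=|g|\cdot h$ with $h$ unimodular on primes, control $M_{|g|}(x)$ two-sidedly by $x(\log x)^{-1}\exp\left(\sum_{p\le x}|g(p)|/p\right)$, apply the Hal\'asz--Montgomery mean value inequality, and lower-bound the resulting pretentious distance $\min_{|\tau|\le T}\sum_{p\le x}\left(|g(p)|-\text{Re}(g(p)p^{-i\tau})\right)/p$ by the sum of blockwise minima. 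Your algebra confirming that the $B_j$-weighted expression in \eqref{UPPER} equals exactly this blockwise quantity is correct, as is the direction of the inequality $\min_\tau\sum_j\ge\sum_j\min_\tau$, and the convolution comparison $M_g(x)=\sum_d b(d)M_{|g|}(x/d)$ is the right mechanism for part (ii). This much is sound.

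However, three steps are asserted rather than established, and each carries real content. (a) Hal\'asz--Montgomery yields $(1+\mc{U})e^{-\mc{U}}$, not $e^{-\mc{U}}$; since $\mc{U}$ can be of size $\log_2 x$, the extra factor cannot be absorbed into an implied constant, and your fix $(1+\mc{U})e^{-\mc{U}}\ll_{\e}e^{-(1-\e)\mc{U}}$ weakens the exponent, which the stated form of \eqref{UPPER} does not permit; removing this loss (or showing the $B^2/\delta$ prefactor already accounts for it) requires an argument you do not give. (b) The refined bound in part (i) is the heart of that part: one must show that, for a good partition, the minimizing $\tau$ may be taken to be $0$ up to a controlled loss, i.e.\ $\Delta_{E_j}(x;g,T)\gg c_j\sum_{p\in E_j,\,p\le x}(|g(p)|-\text{Re}\,g(p))/p$ with the stated $c_j$. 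You name the ingredients ($1-\cos\theta\gg\theta^2$, the angular separation $\beta_j$, the regularity of $\zeta_{E_j}$ near $s=1$) but never assemble them, and the role of the hypothesis $\max_j\Delta_{E_j}(x;g,T)\ge C\log_3 x$ is left as a slogan. (c) In part (ii) the precise error term $\mc{R}$ --- the exponent $\gamma_0/(4(1+\gamma_0))$, the powers of $\log x$, and above all the dichotomy defining $d_1$ --- is reverse-engineered from the statement rather than derived from your convolution identity; you correctly flag this as the heaviest bookkeeping but do not carry it out, and it is exactly these explicit quantities that Theorems \ref{HalApp} and \ref{GOALTHM} consume downstream. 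As a strategy memo the proposal is on target; as a proof it is incomplete at precisely the quantitative junctures that give the theorem its force.
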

We also proved the following theorem, which is a generalization of a theorem of Wirsing (for references, see \cite{WirExt}).
\begin{thm}[Theorem 2.5 in \cite{WirExt}] \label{WIRSINGEXT}
i) Let $g$ satisfy the hypotheses of Theorem \ref{HalGen} i). Suppose $f:\mb{N} \ra [0,\infty)$ is a multiplicative function satisfying $\delta \leq |g(p)| \leq f(p)\leq B$, $|g(n)| \leq f(n)$ for all $n$, and such that $\frac{M_g(x)}{M_{f}(x)} \ra 0$ as $x \ra \infty$. Then 
\begin{equation*}
\frac{|M_g(x)|}{M_{f}(x)} \ll_{B,\phi,\beta} \left(\frac{B}{\delta}\right)^3 \frac{|M_g(x)|}{M_{|g|}(x)}\exp\left(-\sum_{p \leq x} \frac{f(p)-|g(p)|}{p}\right),
\end{equation*}
and the appropriate estimate of the ratio $|M_g(x)|/M_{|g|}(x)$ in \eqref{UPPER} may be used. \\
ii) Suppose either $f$ and $g$ are both strongly multiplicative, and $g$ additionally satisfies the hypothesis $|g(p)-f(p)| \leq \eta$ for each prime $p$. Then if $A := \exp\left(-\sum_{p \leq x} \frac{|g(p)|-g(p)}{p}\right)$ and $X := \exp\left(-\sum_{p \leq x} \frac{f(p)-|g(p)|}{p}\right)$ then
\begin{equation*}
M_{g}(x) = M_{f}(x)\left(\exp\left(-\sum_{p \leq x} \frac{f(p)-g(p)}{p}\right) + O_{B,m}\left(\mc{R}_1|A| + \mc{R}_2X\right)\right),
\end{equation*}
where we have put
\begin{align*}
\mc{R}_1 := \left(\frac{B}{\delta}\right)^2 \left(X\left(d_1\eta^{\frac{1}{2}} + \log^{-\frac{\delta\beta^3}{2}} x + \delta^{-1}e^{-\frac{d_1}{\sqrt{\eta}}}\right) + \log^{-\frac{2\delta}{3}}x + \delta^{-1}e^{-\frac{2d_1}{\sqrt{\eta}}}\right). \\
\mc{R}_2 := \left(\frac{B}{\delta}\right)^2 \left(d_1\eta^{\frac{1}{2}}|A| + \log^{-\frac{2\delta}{3}}x + \delta^{-1}e^{-\frac{2d_1}{\sqrt{\eta}}} + |A|^{\frac{\gamma_0}{4(1+\gamma_0)}}\left(\log^{-\frac{\delta\beta^3}{2}} x + \delta^{-1}e^{-\frac{d_1}{\sqrt{\eta}}}\right)\right).
\end{align*}
\end{thm}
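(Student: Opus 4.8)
The plan is to deduce Theorem \ref{WIRSINGEXT} from Theorem \ref{HalGen} by factoring the comparison of $M_g$ with $M_f$ through the modulus of $g$: first a \emph{phase} comparison of the complex-valued $g$ with $|g|$, which is exactly the content of Theorem \ref{HalGen}, and then a \emph{Wirsing} comparison of the non-negative strongly multiplicative function $|g|$ with the dominating non-negative function $f$. One then multiplies the two resulting expansions and collects errors.

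For part (i), \eqref{UPPER} already bounds $|M_g(x)|/M_{|g|}(x)$, so it remains to prove $M_{|g|}(x)/M_f(x) \ll_B (B/\delta)^3\exp\left(-\sum_{p \leq x}(f(p)-|g(p)|)/p\right)$. For this I would combine a standard upper bound for sums of non-negative multiplicative functions (of Halberstam--Richert type), $M_{|g|}(x) \ll_B \frac{x}{\log x}\prod_{p \leq x}(1+|g(p)|/(p-1))$, with the matching lower bound $M_f(x) \gg_{\delta,B}\frac{x}{\log x}\prod_{p \leq x}(1+f(p)/(p-1))$, which holds since $\delta \leq f(p) \leq B$ forces $\sum_{p \leq x}f(p)/p \gg \delta\log\log x$ to diverge. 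The factors $x/\log x$ cancel in the ratio, $\prod_{p \leq x}\frac{1+|g(p)|/(p-1)}{1+f(p)/(p-1)} = \exp\left(-\sum_{p \leq x}(f(p)-|g(p)|)/p + O_B(1)\right)$ because $|g(p)|,f(p) \leq B$, and the explicit $B/\delta$-dependence of the two mean-value bounds multiplies out to the claimed $(B/\delta)^3$.

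For part (ii), the hypothesis $|g(p)-f(p)| \leq \eta$ together with $f(p) \geq \delta$ forces $g(p)$ to lie within $\eta$ of the positive real number $f(p)$, so $g$ has small argument and Theorem \ref{HalGen}(ii) applies, giving $M_g(x) = M_{|g|}(x)\left(A + O_{B,m}(\mc{R})\right)$ with $\mc{R}$ as in \eqref{ASYMP}. For the Wirsing comparison, write $|g| = f \star w$ with $w := f^{-1} \star |g|$, a real multiplicative function satisfying $w(p^j) = (1-f(p))^{j-1}(|g(p)|-f(p))$, whence $|w(p^j)| \leq (1+B)^{j-1}\eta$; then $M_{|g|}(x) = \sum_{d \leq x}w(d)M_f(x/d)$. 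The main term arises on replacing $M_f(x/d)$ by $M_f(x)/d$: one evaluates $\sum_{d \leq x}w(d)/d$ via its Euler product $\prod_{p \leq x}(1+w(p)/p+w(p^2)/p^2+\cdots)$, whose linear part contributes $\exp\left(-\sum_{p \leq x}(f(p)-|g(p)|)/p\right) = X$ and whose higher (absolutely convergent) terms contribute only $1+O_B(\eta)$, the truncation error at $x$ --- the smooth tail $\sum_{d>x,\,P^+(d)\le x}w(d)/d$ --- being negligible by Rankin's trick applied to the near-alternating $w$. The remaining error $\sum_{d \leq x}w(d)\left(M_f(x/d)-M_f(x)/d\right)$ is controlled, after splitting at a suitable cutoff, by a quantitative \emph{local} (Lipschitz-type) estimate for $t \mapsto M_f(t)/t$ --- a $(B/\delta)$-explicit consequence of Hal\'{a}sz's theorem for non-negative multiplicative functions --- for small $d$, and again by Rankin's trick for the tail. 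This produces $M_{|g|}(x) = M_f(x)\left(X + O_{B,m}(\mc{R}_B)\right)$ with a remainder $\mc{R}_B$ of the same shape as $\mc{R}_1$. Multiplying the two expansions, and using $XA = \exp\left(-\sum_{p \leq x}(f(p)-|g(p)|)/p\right)\exp\left(-\sum_{p \leq x}(|g(p)|-g(p))/p\right) = \exp\left(-\sum_{p \leq x}(f(p)-g(p))/p\right)$, the cross-terms $\mc{R}_B|A|+X\mc{R}+\mc{R}_B\mc{R}$ collapse to $\ll \mc{R}_1|A|+\mc{R}_2X$, yielding both displayed formulas of part (ii).

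The main obstacle is the Wirsing comparison, specifically the power-saving evaluation of $\sum_{d \leq x}w(d)M_f(x/d)$ \emph{relative to} $M_f(x)$. Since $f$ is not assumed to possess a mean value, one must argue that the oscillation of $f(p)$ about its average affects $M_f(x)$ and $M_{|g|}(x)$ in the same way to leading order and hence cancels in the ratio; and since $\sum_p(f(p)-|g(p)|)/p$ may diverge --- at rate $\sim\eta\log\log x$, so that $X \ra 0$ --- every sum and Euler product must be kept truncated at $x$ throughout. The delicate points are the quantitative local estimate for $M_f(t)/t$ with explicit dependence on $\delta$ and $B$, and the Rankin-type control of the smooth-tail and large-$d$ contributions, which must exploit the signs of $w$ rather than its modulus (a crude bound via $\sum_d|w(d)|/d$ being far too lossy). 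A secondary, purely organisational difficulty is verifying that the accumulated errors assemble into precisely $\mc{R}_1$ and $\mc{R}_2$, including the exponents of $\log x$ and the power $\gamma_0/(4(1+\gamma_0))$ on $|A|$ inherited from \eqref{ASYMP}.
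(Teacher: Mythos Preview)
This theorem is not proved in the paper under review. It is quoted as ``Theorem 2.5 in \cite{WirExt}'', a separate companion paper by the author, and is used here only as a black-box input (chiefly in the derivation of Corollary~\ref{CORWIRSING} at the end of Section~2). There is therefore no proof in this paper against which your proposal can be compared.

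For what it is worth, your outline --- factoring the comparison of $M_g$ with $M_f$ through $M_{|g|}$, so that Theorem~\ref{HalGen} handles the phase step $M_g/M_{|g|}$ and a separate Wirsing-type argument handles the non-negative ratio $M_{|g|}/M_f$ --- is the natural decomposition and is very plausibly the skeleton of the argument in \cite{WirExt}. Your identification of the genuine difficulty in part (ii), namely a quantitative Lipschitz-type control of $t \mapsto M_f(t)/t$ with explicit $(B/\delta)$-dependence together with tail control of $\sum_d w(d)M_f(x/d)$ that exploits the signs of $w$ rather than bounding by $|w|$, is accurate. One minor slip: from $|g(p)-f(p)| \le \eta$ with $f(p) \ge \delta$ real and positive you obtain $|\arg(g(p))| \lesssim \eta/\delta$, not $\lesssim \eta$; the argument parameter fed into Theorem~\ref{HalGen}(ii) therefore carries an extra factor of $\delta^{-1}$, which should be tracked when assembling $\mc{R}_1$ and $\mc{R}_2$. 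Without access to \cite{WirExt} I cannot confirm that the error bookkeeping there matches yours term by term.
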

In the next section we will discuss how these theorems can be used to prove Theorem \ref{HalApp} and Theorem \ref{CORWIRSING}.
\section{Results and Strategy of Proof}
%Let us recall several crucial pieces of notation from \cite{prev} and introduce some new ones necessary for what follows. 
Given a complex vector $\mbf{z} \in \mb{C}^{n+1}$ with non-zero components, and $s \in \mb{C}$ with $\text{Re}(s) > 1$ we define 
\begin{align*}
F(\mbf{z};s) &:= \prod_{0 \leq j \leq n} \prod_{p \in E_j} \left(1+\frac{z_j}{p^s-1}\right) = \sum_{n \geq 1} \frac{f_{\mbf{z}}(n)}{n^s}.  
%\\
%H(\mbf{z};s) &:= \prod_{0 \leq j \leq n} \prod_{p \in E_j}\left(1-\frac{z_j}{p^s}\right)^{-1} = \sum_{n \geq 1} \frac{h_{\mbf{z}}(n)}{n^s}, 
\end{align*}
where $f_{\mbf{z}}$ 
%and $h_{\mbf{z}}$ are 
is implicitly defined to satisfy $f_{\mbf{z}}(n) = \prod_{0 \leq j \leq n} z_j^{\omega_{E_j}(n)}$. 
%We recall that a \emph{strongly multiplicative} arithmetic function $f: \mb{N} \ra \mb{C}$ is a multiplicative function satisfying $f(p^k) = f(p)$ for any prime $p$ and any $k \in \mb{N}$. 
It is readily checked that $f_{\mbf{z}}(n)$ is accordingly a strongly multiplicative function.
%This Euler product is the principal object of study in \cite{prev}. \\
%and $h_{\mbf{z}}(n) = \prod_{0 \leq j \leq n} z_j^{\Omega_{E_j}(n)}$, with $\Omega_{E_j}(n) := \sum_{p^k||n \atop p \in E_j} k$. 
%For convenience, we henceforth omit the subscript vector $\mbf{z}$ when it is understood.  
%Note that for each prime $p$, $f(p) = h(p)$, but while $h$ is \emph{completely multiplicative}, i.e., $h(mn) = h(m)h(n)$ for each $m,n \in \N$, $f$ is only \emph{strongly multiplicative}, i.e., $f(mn) = f(m)f(n)$ whenever $(m,n) = 1$, and moreover, $f(p^k) = f(p)$ for all $k \geq 1$ and any prime $p$.  The following definition describes the relationship between $f$ and $h$ (which, admittedly, is perhaps non-standard).
%\begin{def1}
%Let $g: \mb{N} \ra \mb{C}$ be a strongly multiplicative function. We define $\tilde{g}$ to be the \emph{completely multiplicative extension} of $g$ if $\tilde{g}(p) = g(p)$ and $\tilde{g}(n) = \prod_{p^k||n} g(p)^k$.
%\end{def1}
%Clearly, $h = \tilde{f}$ according to the notation of the previous definition.  It will be clear from what follows (see Lemma \ref{StrongComp}) that, for all intents and purposes relevant to us, we can generally treat $f$ and $\tilde{f}$ the same.  \\
Observe that for some choice of $\mbf{z}$ as above,
\begin{equation*}
\sum_{m \leq x} f_{\mbf{z}}(m) = \sum_{m \leq x} z_0^{\omega_{E_0}(m)}\cdots z_n^{\omega_{E_n}(m)} = \sum_{\mbf{k} \in \mb{N}_0^{n+1}}\pi(x;\mbf{E},\mbf{k}) z_0^{k_0}\cdots z_n^{k_n}.
\end{equation*}
Let $\rho_j > 0$ be arbitrary for each $j$, and set $\mc{D} := \prod_{0 \leq j \leq n} \partial D(0,\rho_j)$, the product set of bounding circles of each disk around 0 of radius $\rho_j$. Applying Cauchy's integral theorem in each of the $n+1$ variables $z_j$, we get
\begin{equation} \label{CAUCHYINT}
\pi(x;\mbf{E},\mbf{k}) = (2\pi i)^{-(n+1)} \int_{\mc{D}} \frac{dz_0 \cdots dz_n}{z_0^{k_0+1}\cdots z_n^{k_n+1}} \left(\sum_{m \leq x} f_{\mbf{z}}(m)\right).
\end{equation}
In \cite{prev} we approached the problem of estimating $\pi(x;\mbf{E},\mbf{k})$ by using the Perron formula (see section II.2 in \cite{Ten2}) to express $\sum_{m \leq x} f_{\mbf{z}}(m)$ as a line integral in $s$ of $F(\mbf{z};s)x^ss^{-1}$ and evaluating the multiple integral for $\pi(x;\mbf{E},\mbf{k})$ thus produced via the saddle point method. Let us observe that, in the present context, a saddle point argument, as implemented in \cite{prev}, is ineffectual in determining an asymptotic formula for \eqref{GOAL} (in the remainder of this paragraph we shall make frequent reference to particular elements of \cite{prev}; the reader may wish to skip this altogether if (s)he wishes). Indeed, the saddle point method requires crucially that in the Taylor expansion of a (multivariable) locally $\mc{C}^2$ function, the second derivative is negative and has sufficiently large absolute value; in this case, one can replace the truncated $\tau$-integral in Lemma 5.8 of \cite{prev} by the infinite integral of a Gaussian function with small error.  In Lemma 5.6 there it is shown that the size of the absolute value is bolstered by large values of $\rho_j := \frac{k_j}{E_j(x)}$ (see Lemma 4.2 there); here, in the regime where $k_j$ is typically much smaller than $E_j(x)$ or perhaps \emph{at most} a constant multiple of $E_j(x)$, $\rho_j$ (the value of which is essentially forced by Lemma 4.1 there) is no longer large enough to provide an asymptotic formula.  Thus, we must seek a new method. \\
It is sufficient to be able to give precise information about the summatory function $\sum_{m \leq x} f_{\mbf{z}}(m)$ for each $\mbf{z}$ with $|z_j| = \rho_j$ in order to evaluate $\pi(x;\mbf{E},\mbf{k})$ precisely.  In particular, it will be clear that the largest contributions to the integral take place when $|\text{arg}(z_j)|$ is rather small; hence, asymptotic information in this case, and sufficiently sharp upper bounds in all other cases are necessary in order to provide asymptotic information for the integral in \eqref{CAUCHYINT}.  \\
%In Hal\'{a}sz' paper \cite{Hal2}, a general theorem regarding mean values of complex-valued, completely multiplicative functions $g$ is given, in which, assuming $\delta > 0$ exists such that $|g(p)| \in [\delta,2-\delta]$, we have
%\begin{equation} \label{HalUpper}
%\sum_{m \leq x} g(m) \ll x\exp\left(\sum_{p \leq x} \frac{|g(p)|-1}{p} - c_1\sum_{p \leq x} \frac{|g(p)|-\text{Re}(g(p))}{p}\right),
%\end{equation}
%for some constant $c_1 > 0$; in case there is some $0 < \eta < 1$ such that, uniformly in $p$, $|g(p)-1| < \eta$ then there exist constants $c_2,c_3 > 0$ such that 
%\begin{equation} \label{HalAsymp}
%\sum_{m \leq x} g(m)=Ax + O\left(x\eta|A| + \left(e^{-\frac{c_2}{\eta}} + \log^{-c_3}x\right)\exp\left(\sum_{p \leq x} \frac{|f(p)|-1}{p}\right)\right),
%\end{equation}
%where $A := \exp\left(\sum_{p \leq x} \frac{f(p)-1}{p}\right)$. 
A direct application of \eqref{HALLLL} and \eqref{HALLL} (following the proof of Theorem \ref{HalApp} below) only provides asymptotic information for $f_{\mbf{z}}(p) = z_j$ in a small neighbourhood around 1; in practical terms related to our problem, we can only determine $\pi(x;\mbf{E},\mbf{k})$ for $k_j = (1+o(1))E_j(x)$. While this \emph{is} of interest to us, we would like to be able to estimate $\pi(x;\mbf{E},\mbf{k})$ when $\frac{k_j}{E_j(x)}$ is in fact much smaller, or a constant factor larger, than 1 as well.\\
%(or perhaps some $j$ with the former condition, while others with the latter).  \\
A look at the proof in \cite{Hal2} shows that the term $A$ in \eqref{HALLL} arises, essentially, from a comparison of the Dirichlet series $F(\mbf{z};s)$ with $\zeta(s)$ directly; the condition $|f_{\mbf{z}}(p)-1| < \eta$ is then necessary to ensure that the ratio $\zeta(s)^{-1}F(\mbf{z};s)$ can be approximated by $A$ (which is independent of $\tau$), at least when $s = \sg + i\tau$ where $\sg$ is fixed to be slightly larger than 1 and $|\tau|$ is sufficiently small as $x \ra \infty$. 
%Indeed, $A$ is an effective approximation of $\frac{F(\mbf{z};s)}{\zeta(s)}$ for such values of $s$. 
Because $\sum_{m \leq x} f_{\mbf{z}}(m)$ is determined from $F(\mbf{z};s)$ by the Perron formula (which, for convenience, we will call the \emph{Perron transform} of $F(\mbf{z};s)$ with respect to $x$), and moreover it is typically the case (unless $f_{\mbf{z}}$ correlates with an archimedean character $n \mapsto n^{i\tau_0}$ for some $\tau_0 \in \mb{R}$) that the major contributions of the integrals of Perron transforms emerge when $|\tau|$ is small (in which case the factors $n^{-i\tau}$ have a longer period of oscillation throughout $n \leq x$, and less cancellation due to variation in its complex argument occurs), it follows heuristically that $F(\mbf{z};s)$ and $\zeta(s)$, up to the multiplicative factor $A$, should have the same Perron transform.  This is morally, the idea of Hal\'{a}sz' argument and, in fact, explains whence comes the factor $x$ (since the Perron transform of $\zeta(s)$ with respect to $x$ is $\sum_{n \leq x} 1 = x + O(1)$).  In a similar vein, if we wanted to consider cases where $z_j$ is not close to 1, it would be more favourable to compare $F(\mbf{z};s)$ to a Dirichlet series other than $\zeta(s)$.\\
In light of the above observation, we consider estimating \eqref{CAUCHYINT} in the case that $t_j := \arg(z_j)$ is small in absolute value, for each $j$. 
%and try to modify Hal\'{a}sz' theorem in order to apply it to our problem.  
Indeed, 
\begin{equation*}
|z_j-\rho_j| = \rho_j|e^{it_j}-1| \leq |t_j|\rho_j,
\end{equation*}
so the argument condition above suffices in order to constrain the size of $|f_{\mbf{z}}(p)-|f_{\mbf{z}}(p)||$.
%for $g(n) := f_{\mbf{z}}(n)$. 
This, in turn, arises from a comparison of $F(\mbf{z};s)$ and $F(\mbf{\rho};s)$. Moreover, the sets $E_j$ may be such that the quantities $E_j(x)$ are potentially incommensurately-sized; hence, it is sensible to have separate conditions governing the behaviour of $g$ on different sets $E_j$ and $E_{j'}$. This motivates the form of Theorems \ref{HalGen} above.\\
To prove Theorem \ref{HalApp} we take $g(m) := f_{\mbf{z}}(m)$ in Theorem \ref{HalGen}. We apply \eqref{UPPER} when $|\arg(z_j)| > \eta_j$ for some $j$, and \eqref{ASYMP} when $|\arg(z_j)| \leq \eta_j$ for all $j$ in \eqref{CAUCHYINT}. The latter evaluation provides us with our main term, and the former calculation contributes to the error term. 
%in our special case of $f_{\mbf{z}}(m) := \prod_{0 \leq j \leq n} z_j^{\omega_{E_j}(m)}$, we will show the following.
Given the weak nature of the constraints on our sets $E_j$, in general we cannot evaluate the sum $M_{f_{\mbf{\rho}}}(x)$ appearing in Theorem \ref{HalApp} asymptotically without further data regarding the sets (for an example of conditions in which this can be done, though, see \cite{Wir}). On the other hand, it is a particular case of a result of Selberg that when $\rho \ll 1$ as $x \ra \infty$, the sum $\sum_{n \leq x} \rho^{\omega(n)}$ can be estimated precisely (see Lemma \ref{SELBERG} below).
%\begin{equation} \label{SelSum}
%\sum_{n \leq x} \rho^{\omega(n)} = x(\log x)^{\rho-1}F(\rho)\left(1+O\left(\frac{1}{\log x}\right)\right),
%\end{equation}
%where $F(\rho) := \Gamma(\rho)^{-1}\prod_p \left(1+\frac{\rho}{p-1}\right)\left(1-\frac{1}{p}\right)^{\rho}$. 
We may therefore invoke such estimates in the case that $\rho_j = \rho_{j'}$ for all $0 \leq j < j' \leq n$, given that $\{E_j\}_j$ partitions the primes (and thus $f_{\rho \cdot \mbf{1}}(m) = \rho^{\omega(n)}$, for any $\rho > 0$, where $\mbf{1} := (1,\ldots,1)$).\\
In order to get data about $\pi(x;\mbf{E},\mbf{k})$ at vectors some of whose components are distinct, we must extrapolate an explicit asymptotic formula for $\pi(x;\mbf{E},\mbf{k})$ from the special case where $\frac{k_j}{E_j(x)}$ is constant in $j$ to a larger collection of vectors, at least when there is some $\rho$ such that each ratio $\frac{k_j}{E_j(x)}$ is close to $\rho$. To this end, we introduce some notation. \\
Given a vector $\mbf{\rho}$ define 
\begin{align}
F(\mbf{\rho}) &:= \prod_{0 \leq j \leq n} \prod_{p\in E_j} \left(1+\frac{\rho_j}{p-1}\right)\left(1-\frac{1}{p}\right)^{\rho_j}, \label{FDef}\\
M(\mbf{\rho}) &:= \frac{1}{n+1}\sum_{0 \leq j \leq n} \rho_j. \label{MDef} 
\end{align}
Furthermore, let $b := \lim_{t \ra \infty} \left(\sum_{p \leq t} \frac{1}{p} - \log_2 t\right)$ (which exists by Mertens' theorem).
%where $\beta_j := \frac{1}{\log^2 x}\sum_{p \leq x \atop p \in E_j} \frac{\log	^p}{p}$ (note that $\sum_{0 \leq j \leq n} \beta_j = 1+ O\left(\frac{1}{\log x}\right)$ by the prime number theorem). 
%We will write $\mbf{1}:= (1,\ldots,1) \in \mb{R}^{n+1}$.
We prove the following result via an induction argument on the integers $k_j \geq \llf \rho E_j(x)\rrf+1$.
\begin{thm} \label{GOALTHM}
Assume the hypotheses of Theorem \ref{HalApp}. Furthermore, suppose there is some $\rho > 0$ such that $|\rho_j-\rho| = o\left(\min_l \log^{-1}E_l(x)	\right)$. 
%Let $A(\rho) := \sum_{0 \leq j \leq n} \{\rho E_j(x)\}$, where $\{u\} := u-\llf u\rrf$. 
Then 
\begin{equation*}
\pi(x;\mbf{E},\mbf{k}) =  \rho e^{-b(\rho-1)}\frac{F(\mbf{\rho})}{\Gamma(M(\mbf{\rho})+1)}x\prod_{0 \leq j \leq n} \frac{E_j(x)^{k_j}}{k_j!}e^{-E_j(x)}\left(1+O\left(R_j(x,\mbf{\rho},\rho)\right)\right),
\end{equation*}
where we have set
\begin{equation*}
R_j(x,\mbf{\rho},\rho) := |\rho-\rho_j|\log E_j(x) +E_j(x)^{-\frac{1}{n+1}}+ (\delta_jE_j(x))^{-\frac{1}{6}},
\end{equation*}
for each $0 \leq j \leq n$.
\end{thm}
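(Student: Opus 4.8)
The plan is to feed Theorem \ref{HalApp} into the problem and then evaluate the mean value $\sum_{m\leq x}f_{\mbf\rho}(m)$ that it produces. Because all the $\rho_j$ lie within $o(1)$ of the common value $\rho$, in applying Theorem \ref{HalApp} one is free to take the parameters $\delta_j,B$ comparable to $\rho$, so that $B/\delta=O_\rho(1)$; since $E_j(x)\leq P(x):=\sum_{p\leq x}\tfrac1p\ll\log_2 x$, the error term of Theorem \ref{HalApp} is then $O\!\left((\delta_jE_j(x))^{-1/6}\right)$, which is absorbed into $R_j$. Hence it remains to prove
\[
\left(\sum_{m\leq x}f_{\mbf\rho}(m)\right)\prod_{0\leq j\leq n}e^{-k_j} \;=\; \rho e^{-b(\rho-1)}\frac{F(\mbf\rho)}{\Gamma(M(\mbf\rho)+1)}\,x\prod_{0\leq j\leq n}e^{-E_j(x)}\left(1+O(R_j)\right).
\]

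\textbf{The diagonal anchor.} Since $\{E_j\}_j$ partitions the primes, at the diagonal $\mbf\rho=\rho\cdot\mbf 1$ one has $f_{\rho\cdot\mbf1}(m)=\rho^{\omega(m)}$, and for $\rho=O(1)$ Selberg's theorem (Lemma \ref{SELBERG}) gives $\sum_{n\leq x}\rho^{\omega(n)}=\frac{F(\rho\cdot\mbf1)}{\Gamma(\rho)}x(\log x)^{\rho-1}\bigl(1+O(1/\log x)\bigr)$, with $F(\rho\cdot\mbf1)=\prod_p\left(1+\tfrac{\rho}{p-1}\right)\left(1-\tfrac1p\right)^{\rho}$ the specialization of \eqref{FDef}. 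For $\mbf\rho$ near the diagonal I would compare $\sum_{m\leq x}f_{\mbf\rho}(m)$ with $\sum_{n\leq x}\rho^{\omega(n)}$ through Theorem \ref{WIRSINGEXT} ii) with $f(n)=\rho^{\omega(n)}$: then $A=1$ and $X=\exp\!\bigl(\sum_j(\rho_j-\rho)E_j(x)\bigr)$, so $\sum_{m\leq x}f_{\mbf\rho}(m)=\bigl(\sum_{n\leq x}\rho^{\omega(n)}\bigr)X\,(1+O(\mc R_1/X+\mc R_2))$. Multiplying by $\prod_j e^{-k_j}$, the exponent collapses since $\sum_j(\rho_j-\rho)E_j(x)-\sum_j k_j=-\rho P(x)$; Mertens' theorem $P(x)=\log_2 x+b+o(1)$ then turns $(\log x)^{\rho-1}e^{-\rho P(x)}$ into $(\log x)^{-1}e^{-\rho b}(1+o(1))$, while $\prod_j e^{-E_j(x)}=e^{-P(x)}=(\log x)^{-1}e^{-b}(1+o(1))$. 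Comparing with the target and using $1/\Gamma(\rho)=\rho/\Gamma(\rho+1)$, the claim reduces to $F(\rho\cdot\mbf1)/F(\mbf\rho)=1+O_n\!\left(\max_l|\rho_l-\rho|\right)$ and $\Gamma(M(\mbf\rho)+1)/\Gamma(\rho+1)=1+O\!\left(\max_l|\rho_l-\rho|\right)$, both routine Taylor estimates — for the first, the per-prime contributions $\log\frac{p-1+\rho}{p-1+\rho_j}+(\rho-\rho_j)\log(1-\tfrac1p)$ are $O(|\rho-\rho_j|\,p^{-2})$ and $\sum_p p^{-2}$ converges. This already yields an asymptotic formula, but with relative error governed by $\max_l|\rho_l-\rho|$ rather than the coordinate-localized $R_j$.

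\textbf{The induction and the main obstacle.} To obtain the stated error I would instead propagate the formula from the anchor vector $\mbf{k}^{*}:=(\lfloor\rho E_0(x)\rfloor,\ldots,\lfloor\rho E_n(x)\rfloor)$ by incrementing one coordinate $k_i$ at a time. Writing each $m$ counted by $\pi(x;\mbf E,\mbf k)$ and divisible by $p\in E_i$ as $m=p^a\ell$ with $p\nmid\ell$ gives
\[
k_i\,\pi(x;\mbf E,\mbf k)=\sum_{\substack{p\in E_i\\p\leq x}}\ \sum_{\substack{a\geq 1\\p^a\leq x}}\bigl|\{\ell\leq x/p^a:\ p\nmid\ell,\ \omega_{E_i}(\ell)=k_i-1,\ \omega_{E_l}(\ell)=k_l\ (l\neq i)\}\bigr|;
\]
dropping the $a\geq 2$ terms and the coprimality correction (bounded crudely using the size of $\pi$ already available) and replacing $\pi(x/p;\cdot)$ by $p^{-1}\pi(x;\cdot)$ — licit by the near-linear growth of $\pi$, from Theorem \ref{HalApp} applied at $x/p$ — I expect to obtain $\pi(x;\mbf E,\mbf k+\mbf e_i)=\tfrac{E_i(x)}{k_i+1}\pi(x;\mbf E,\mbf k)$ up to a relative error of size $O(k_i^{-1})$ plus lower-order terms. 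As the factor $E_i(x)^{k_i+1}/(k_i+1)!$ already carries $E_i(x)/(k_i+1)$, this is consistent with the target once one checks that $\rho e^{-b(\rho-1)}F(\mbf\rho)/\Gamma(M(\mbf\rho)+1)$ changes by $1+O(1/E_i(x))$ when $\rho_i$ increases by $1/E_i(x)$ — the increment of $\log F(\mbf\rho)$ being $\tfrac1{E_i(x)}\sum_{p\in E_i}\bigl(\tfrac1{p-1+\rho_i}-\tfrac1p\bigr)=O(1/E_i(x))$ and that of $\log\Gamma(M(\mbf\rho)+1)$ likewise $O(1/E_i(x))$. Summing over the $O(E_i(x)|\rho-\rho_i|)+O(1)$ increments needed to pass from $\lfloor\rho E_i(x)\rfloor$ to $k_i$, and over the $n+1$ coordinates, should give $R_j$. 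The hard part will be exactly this accounting: one must confirm that the spurious powers of $\log x$ coming from Selberg's formula, from Mertens' theorem, and from Stirling's approximation of the factors $E_j(x)^{k_j}/k_j!$ cancel identically — so that all of the partition-dependence of $\pi(x;\mbf E,\mbf k)$ beyond those elementary factors is contained in $F(\mbf\rho)$ — and that the per-step errors in the recursion do not accumulate past $O(|\rho-\rho_j|\log E_j(x))$, which is precisely why the coordinate-by-coordinate induction, rather than a single application of Theorem \ref{WIRSINGEXT} ii), is needed.
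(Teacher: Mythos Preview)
Your approach is the paper's: reduce via Theorem \ref{HalApp}, anchor at the diagonal using Selberg (Lemma \ref{SELBERG}), propagate by a coordinate-wise recursion on $k_i$ (this is Lemma \ref{INDUCT}), and finish by the Lipschitz estimates on $F$ and $\Gamma$ (Lemma \ref{PERTURB}). Two points, however, need correcting.

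First, your per-step error $O(1/k_i)$ is too optimistic, and it is precisely here that the $\log E_j(x)$ in $R_j$ is born. In the recursion $(k_i+1)\pi(x;\mbf E,\mbf k+\mbf e_i)=\sum_{p^\nu\leq x,\,p\in E_i}\pi(x/p^\nu;\mbf E,\mbf k)+O(x^{1/2})$, the paper splits the prime-power sum at $y:=\exp(\log x/E_i(x))$. For $p^\nu\leq y$ the replacement $\pi(x/p^\nu;\cdot)\approx p^{-\nu}\pi(x;\cdot)$ is accurate to $O(1/E_i(x))$, but the tail $y<p^\nu\leq x^{1/2}$ can only be bounded crudely by $\sum_{y<p\leq x,\,p\in E_i}p^{-1}\ll \log E_i(x)$, yielding a relative error $O(\log E_i(x)/E_i(x))$ per step. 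Iterated over the $|k_i-k_i(\rho)|=E_i(x)|\rho_i-\rho|+O(1)$ steps this gives exactly $|\rho-\rho_i|\log E_i(x)$; your $O(1/k_i)$ would instead give $O(|\rho-\rho_i|)$, which, while smaller, is not what the analysis actually delivers.

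Second, you do not account for the term $E_j(x)^{-1/(n+1)}$ in $R_j$. The issue is that at the anchor $\mbf k^\ast=(\lfloor\rho E_j(x)\rfloor)_j$ the quantities $\rho_j^\ast:=k_j^\ast/E_j(x)$ are not exactly equal, so $M_{f_{\mbf\rho^\ast}}(x)$ is not literally $\sum_{m\leq x}\rho^{\omega(m)}$, and when one converts $\prod_j e^{-k_j^\ast}$ to $\prod_j e^{-E_j(x)}$ via Selberg and Mertens, a residual factor $\exp(\rho E_j(x)-k_j^\ast)$ survives. The paper handles this with Dirichlet's simultaneous approximation theorem (Lemma \ref{SIMUL}): one replaces the given $\rho$ by a nearby $\rho'$ with $|\rho'-\rho|<E_{j_0}(x)^{-1}$ such that $\max_j\|\rho' E_j(x)\|<E_{j_0}(x)^{-1/(n+1)}$, where $E_{j_0}(x)=\max_j E_j(x)$; the residual factor then contributes $1+O(E_j(x)^{-1/(n+1)})$. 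Your sketch via Theorem \ref{WIRSINGEXT} ii) at the anchor would instead incur an error of order $\eta^{1/2}$ with $\eta=\max_j|\rho_j^\ast-\rho|\leq 1/\min_l E_l(x)$, which in general need not be dominated by $R_j$ when the $E_j(x)$ differ wildly in size.
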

While there is inherent ambiguity in the above lemma, as there may be more than one value of $\rho$ that satisfies the condition in the statement, it is implicit in its proof that, up to a small error, all of these values of $\rho$ give approximately the same answer. Moreover, we stress that when $\max_{0 \leq j \leq n} \rho_j$ is small (i.e., $\rho_j \ll E_j(x)^{-\e}$ for each $j$, say) then this provides an asymptotic formula that is uniform in the corresponding range of $\mbf{k}$. This is because the corresponding neighbourhoods $|\rho_j-\rho|,|\rho_j-\rho'| \ll \log^{-1}E_j(x)$ overlap for small $\rho_j$, where $\rho \neq \rho'$.\\
In the case of good partitions, Theorem \ref{GOALTHM} reproduces (in a slightly different but equivalent form) a result due to Delange \cite{Del}. \\
Applying Theorem \ref{WIRSINGEXT} ii) with $f(n) = 1$ (hence $F(s) = \zeta(s)$) and $g(n) = f_{\mbf{\rho}}(n)$ in Theorem \ref{HalApp} and $\delta, B$ close to 1 gives Corollary \ref{CORWIRSING}, generalizing Hal\'{a}sz' Poisson result to the context of a nice partition of sets $\{E_j\}_j$. The corollary is immediate upon noting that if $\rho_j := \frac{k_j}{E_j(x)}$ and using $C$ as in ii) of Theorem \ref{WIRSINGEXT},
\begin{equation*}
e^{-\sum_{0 \leq j \leq n} k_j}C = \exp\left(-\rho_jE_j(x) + \sum_{0 \leq j \leq n} \sum_{p \leq x \atop p \in E_j} \frac{\rho_j-1}{p}\right) = \exp\left(-\sum_{0 \leq j \leq n} E_j(x)\right).
\end{equation*}
\section{Auxiliary Lemmata}
The following estimate is due to Selberg.
%, will provide us with a maximal growth order for our mean values, and will also play a role in the proof of Lemma \ref{MAIN1}.
\begin{lem}\label{SELBERG}
Let $x$ be sufficiently large, $B > 0$ and $0 < \rho \leq B$. Then 
\begin{equation} \label{SelSum}
\sum_{n \leq x} \rho^{\omega(n)} = x(\log x)^{\rho-1}F(\rho)\left(1+O_B\left(\frac{1}{\log x}\right)\right),
\end{equation}
where $F(\rho) := \Gamma(\rho)^{-1}\prod_p \left(1+\frac{\rho}{p-1}\right)\left(1-\frac{1}{p}\right)^{\rho}$. 
\end{lem}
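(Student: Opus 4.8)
The plan is to prove Lemma \ref{SELBERG} by the standard Selberg--Delange method, realizing $\sum_{n \le x} \rho^{\omega(n)}$ as the Perron transform of the Dirichlet series $G(s) := \sum_{n \ge 1} \rho^{\omega(n)} n^{-s} = \prod_p \left(1 + \frac{\rho}{p^s-1}\right)$, and factoring out the principal singularity by writing $G(s) = \zeta(s)^\rho H(s)$, where
\begin{equation*}
H(s) := \prod_p \left(1 + \frac{\rho}{p^s-1}\right)\left(1 - p^{-s}\right)^{\rho}.
\end{equation*}
First I would check that the Euler product defining $H(s)$ converges absolutely and defines a holomorphic, non-vanishing function in the half-plane $\mathrm{Re}(s) > 1/2$: expanding the local factor, $\left(1 + \frac{\rho}{p^s-1}\right)\left(1 - p^{-s}\right)^{\rho} = 1 + O_B\!\left(p^{-2s}\right)$, the $O$-term being uniform for $0 < \rho \le B$, so the product is comparable to $\prod_p (1 + O_B(p^{-2\sigma}))$. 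In particular $H$ extends holomorphically past $s = 1$, and $H(1) = F(\rho)\,\Gamma(\rho)$ in the notation of \eqref{FDef}; the factor $\Gamma(\rho)^{-1}$ in the statement will ultimately come from the Hankel-contour integral that produces the $(\log x)^{\rho - 1}$ main term.

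Next I would quote (or reprove) the relevant Selberg--Delange estimate: since $\zeta(s)^\rho$ has an algebraic-logarithmic singularity at $s = 1$ of exponent $\rho$ and $G(s)/\zeta(s)^\rho = H(s)$ is holomorphic and bounded in a neighbourhood of $s = 1$ with suitable growth on vertical lines (controlled by classical zero-free-region and convexity bounds for $\zeta$), the Selberg--Delange method yields
\begin{equation*}
\sum_{n \le x} \rho^{\omega(n)} = \frac{H(1)}{\Gamma(\rho)}\, x (\log x)^{\rho - 1}\left(1 + O_B\!\left(\tfrac{1}{\log x}\right)\right),
\end{equation*}
and substituting $H(1)/\Gamma(\rho) = F(\rho)$ gives exactly \eqref{SelSum}. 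The key technical points to verify for the error term to be $O_B(1/\log x)$ rather than something weaker are: (i) the uniformity of all implied constants in $\rho$ over the range $0 < \rho \le B$, which follows from the uniform bound $H(s) = 1 + O_B(p^{-2\sigma})$ term-by-term together with $|\Gamma(\rho)^{-1}| \ll_B 1$ bounded away from $\rho = 0$ only harmlessly — near $\rho = 0$, $\Gamma(\rho)^{-1} \asymp \rho$, and the main term $\rho(\log x)^{\rho-1}F(\rho)/\rho \to$ a finite limit, consistent with $\sum_{n\le x} \rho^{\omega(n)} = x + O(\rho \cdot x \log\log x/\log x)$; (ii) that the contour integral for $\zeta(s)^\rho x^s s^{-1}$ around the Hankel contour about $s=1$ contributes the factor $\Gamma(\rho)^{-1}(\log x)^{\rho-1}$ with a relative error $O(1/\log x)$, a standard computation.

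The main obstacle is bookkeeping the uniformity in $\rho$: one must ensure that every step — the analytic continuation of $H$, the growth estimates on vertical segments used to push the Perron contour to the left, the truncation error in Perron's formula, and the evaluation of the Hankel integral — carries constants depending only on $B$ and not on the particular value of $\rho \in (0, B]$. Since $H$ is a convergent product of factors $1 + O_B(p^{-2s})$ and $\zeta(s)^\rho$ is handled by classical means with the exponent $\rho$ entering only through $|(\log x)^{\rho}| \le (\log x)^B$ and $|\Gamma(\rho)^{-1}| \ll_B 1$, these uniformities are available; nonetheless this is the part that requires care. (Everything here is classical and, as noted, the statement is quoted as being due to Selberg; I would either cite the relevant reference or present the Selberg--Delange argument in the form above.)
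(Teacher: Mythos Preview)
Your proposal is correct and is precisely the Selberg--Delange argument that the paper invokes by citation: the paper's own ``proof'' consists of the single line ``This is proved, for instance, in Chapter II.6 of \cite{Ten2},'' which is exactly the reference where this factorization $G(s)=\zeta(s)^{\rho}H(s)$ and the Hankel-contour evaluation you outline are carried out. Your sketch matches that treatment, including the identification $H(1)/\Gamma(\rho)=F(\rho)$ and the uniformity in $0<\rho\leq B$.
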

\begin{proof}
This is proved, for instance, in Chapter II.6 of \cite{Ten2}. 
\end{proof}
Recall the definitions of $F(\mbf{\rho})$ and $M(\mbf{\rho})$ given in \eqref{FDef} and \eqref{MDef}, respectively.
\begin{lem}\label{PERTURB}
Let $\mbf{u},\mbf{v} \in (0,1/2)^{n+1}$ with $0 < \|\mbf{u}-\mbf{v}\|_1 < 1$. Then there is an absolute constant $C > 0$ such that both: i) $\left|\frac{\Gamma(M(\mbf{v})+1)}{\Gamma(M(\mbf{u})+1)} - 1\right| \leq C\|\mbf{u}-\mbf{v}\|_1$, and ii) $\left|\frac{F(\mbf{v})}{F(\mbf{u})} -1\right| \leq C\|\mbf{u}-\mbf{v}\|_1$.
\end{lem}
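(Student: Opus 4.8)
The plan is to prove both estimates by reducing to the one–variable case via the elementary identity $M(\mbf{v}) - M(\mbf{u}) = \frac{1}{n+1}\sum_{j}(v_j - u_j)$, which in particular satisfies $|M(\mbf{v})-M(\mbf{u})| \leq \frac{1}{n+1}\|\mbf{u}-\mbf{v}\|_1 \leq \|\mbf{u}-\mbf{v}\|_1 < 1$. First I would handle (i): since $\mbf{u},\mbf{v} \in (0,1/2)^{n+1}$, both $M(\mbf{u})+1$ and $M(\mbf{v})+1$ lie in the compact interval $[1, 3/2]$, on which $\log\Gamma$ is $\mc{C}^1$ with bounded derivative (the digamma function $\psi$ is continuous, hence bounded, on $[1,3/2]$). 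By the mean value theorem applied to $t \mapsto \log\Gamma(t+1)$,
\begin{equation*}
\left|\log\Gamma(M(\mbf{v})+1) - \log\Gamma(M(\mbf{u})+1)\right| \leq \left(\sup_{t \in [1,3/2]} |\psi(t)|\right)|M(\mbf{v})-M(\mbf{u})| \leq C_1 \|\mbf{u}-\mbf{v}\|_1
\end{equation*}
for an absolute constant $C_1$. Exponentiating and using $|e^w - 1| \leq |w|e^{|w|} \leq |w|e^{C_1}$ (valid since $|w| \leq C_1$ here) gives (i) with $C \geq C_1 e^{C_1}$.

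For (ii) I would write $\log F(\mbf{u}) = \sum_{j}\sum_{p \in E_j} h_{u_j}(p)$ where $h_\rho(p) := \log\!\left(1 + \frac{\rho}{p-1}\right) + \rho\log\!\left(1-\frac1p\right)$, and similarly for $\mbf{v}$. The key point is that each summand, as a function of $\rho$, has derivative $\partial_\rho h_\rho(p) = \frac{1}{p-1+\rho} + \log(1-1/p)$, and since $\log(1-1/p) = -\frac1p - \frac{1}{2p^2} - \cdots = -\frac{1}{p-1} + O(1/p^2)$ while $\frac{1}{p-1+\rho} = \frac{1}{p-1} + O(\rho/p^2) = \frac{1}{p-1}+O(1/p^2)$ uniformly for $\rho \in (0,1/2)$, we get $|\partial_\rho h_\rho(p)| \leq C_2/p^2$ uniformly. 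Hence by the mean value theorem, for each $j$,
\begin{equation*}
\left|\sum_{p \in E_j}\bigl(h_{v_j}(p) - h_{u_j}(p)\bigr)\right| \leq |u_j - v_j|\sum_{p} \frac{C_2}{p^2} \leq C_3 |u_j - v_j|,
\end{equation*}
and summing over $j$ yields $|\log F(\mbf{v}) - \log F(\mbf{u})| \leq C_3 \|\mbf{u}-\mbf{v}\|_1 \leq C_3$. Exponentiating as before gives (ii).

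The only mild obstacle is verifying the uniform bound $|\partial_\rho h_\rho(p)| \ll 1/p^2$: one must check that the leading $1/(p-1)$ terms in $\frac{1}{p-1+\rho}$ and in $\log(1-1/p)$ genuinely cancel to this order, uniformly over $\rho$ in the bounded range $(0,1/2)$ and over all primes $p$ (the small primes $p = 2, 3, \dots$ only contribute finitely many terms, each bounded). Once this is in hand, both parts follow from the mean value theorem plus the convergence of $\sum_p p^{-2}$, and one takes $C := \max\{C_1 e^{C_1}, C_3 e^{C_3}\}$, which is absolute since none of $C_1, C_2, C_3$ depends on $n$ or on $\mbf{u}, \mbf{v}$.
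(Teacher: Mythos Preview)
Your proof is correct and follows the same strategic outline as the paper's---bound the logarithm of each ratio by a constant times $\|\mbf{u}-\mbf{v}\|_1$, then exponentiate---but you package the bounds more economically. For (i), the paper expands $\log\Gamma(t+1)$ via the Weierstrass product, writes the difference $\log\Gamma(t+1)-\log\Gamma(r+1)$ as an explicit series, and factors out $(t-r)$ by hand; your mean-value-theorem argument, using only that the digamma function is continuous (hence bounded) on the compact interval $[1,3/2]$, reaches the same Lipschitz bound in one line. For (ii), the paper forms the ratio $f_p(u_j)/f_p(v_j)$ directly and Taylor-expands the logarithm, while you differentiate $h_\rho(p)$ in $\rho$ and again invoke the mean value theorem; both arguments rest on the identical cancellation of the leading $1/(p-1)$ contributions from the two factors, leaving a uniformly $O(p^{-2})$ remainder summable over all primes. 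Your route is cleaner and makes the absoluteness of the constant more transparent; the paper's more explicit computation yields no extra information in this instance.
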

\begin{proof}
i) By the Weierstrass factorization theorem, $\Gamma(t) = e^{-\gamma t}t^{-1}\prod_{n \geq 1}\left(1+\frac{t}{n}\right)^{-1}e^{\frac{t}{n}}$, for $t > 0$, where $\gamma$ is the Euler-Mascheroni constant. Using the identity $\Gamma(t+1) = t\Gamma(t)$ and taking logarithms, we have
\begin{equation*}
\log \Gamma(t+1) = -\gamma t  + \sum_{m \geq 1} \left(\frac{t}{m}-\log\left(1+\frac{t}{m}\right)\right).
\end{equation*}
Set $N := \max\{\llf t \rrf,\llf r\rrf\}$. Taylor expanding the logarithm in the range $m > N$ and applying Fubini's theorem, 
\begin{align*}
\sum_{m > N} \left(\frac{t}{m}-\log\left(1+\frac{t}{m}\right)\right) = \sum_{l \geq 2} \frac{(-1)^lt^l}{l}\sum_{m > N} \frac{1}{m^l}.
\end{align*}
Assume now that $0 < r < t \leq \frac{1}{2}$ (so that $N = 0$). Subtracting $\log \Gamma(r+1)$ from $\log \Gamma(t+1)$ gives
\begin{align*}
\log\left(\frac{\Gamma(t+1)}{\Gamma(r+1)}\right) &= -\gamma(t-r) + (t-r)\sum_{l \geq 2} \frac{(-1)^l}{l}\zeta(l)\left(\sum_{0 \leq k \leq l-1} t^kr^{l-1-k}\right) \\
&= (t-r)\left(-\gamma +\sum_{l \geq 2} \frac{(-1)^l}{l}\left(\sum_{0 \leq k \leq l-1} t^kr^{l-1-k}\right) \zeta(l)\right).
\end{align*}
As $1-r > t$, 
\begin{equation*}
\sum_{0 \leq k \leq 2l-1} t^kr^{2l-1-k} - \sum_{0 \leq k \leq 2l} t^kr^{2l-k} = \sum_{0 \leq k \leq 2l-1} t^kr^{2l-1-k}(1-r) - t^{2l} > t^{2l-1}(1-r)-t^{2l} > 0.  
\end{equation*}
As $\zeta(l)$ decreases in $l$, the series above converges, and this convergence is clearly uniform in $0 < r < t < 1/2$. Let $R(r,t)$ be the corresponding value of the series. Then if $R$ is an upper bound for $R(r,t)$ that is uniform in $r$ and $t$ in this range then
\begin{equation*}
\frac{\Gamma(t+1)}{\Gamma(r+1)} = e^{(-\gamma + R)(t-r)} = 1+O\left(|R-\gamma||t-r|\right) = 1+O\left(|t-r|\right).
\end{equation*}
This implies the first claim upon taking $t := M(\mbf{v})$ and $r := M(\mbf{u})$. The case where $t < r$ is the same upon relabelling.\\
%(noting that $t < \max_j v_j$ and $r < \max_j u_j$. \\
ii) For $p$ prime, let $f_p(t) := \left(1+\frac{t}{p}\right)\left(1-\frac{1}{p}\right)^t$.  Thus $F(\mbf{\rho}) := \prod_{0 \leq j \leq n} \prod_{p \in E_j} f_p(\rho_j)$. 
For $0 \leq j \leq n$ and let $p \in E_j$.  Then, as $|u_j-v_j| < 1 < p+v_j$,
\begin{align*}
f_p(u_j)f_p(v_j)^{-1} &= \left(1+\frac{u_j-v_j}{p+v_j}\right)\left(1-\frac{1}{p}\right)^{u_j-v_j} = \exp\left(\log\left(1+\frac{u_j-v_j}{p+v_j}\right)+(u_j-v_j)\log\left(1-\frac{1}{p}\right)\right) \\
&= \exp\left((u_j-v_j)\frac{u_j}{p(p+v_j)} - \sum_{l \geq 2} \frac{(u_j-v_j)^l}{l}\left(\frac{(-1)^l}{(p+v_j)^l}+\frac{1}{p^l}\right)\right) \\
&= \exp\left((u_j-v_j)\frac{u_j}{p(p+v_j)} +O\left(\frac{(u_j-v_j)^2}{p^2}\right)\right).
\end{align*}
Taking products over all primes gives
\begin{equation*}
F(\mbf{u})/F(\mbf{v}) = \exp\left(\sum_{0 \leq j \leq n} u_j(u_j-v_j)\sum_{p \in E_j} \frac{1}{p(p+v_j)} + O\left(\sum_{0 \leq j \leq n} (u_j-v_j)^2\right)\right) = 1+O\left(\sum_{0 \leq j \leq n} |u_j-v_j|\right),
\end{equation*}
which implies the second claim.
\end{proof}
We need the following lemma in order to resolve a technical difficulty involved in comparing $(\rho-1)E_j(x)$, coming from Lemma \ref{SELBERG}, with the nearest integer to $\rho E_j(x)$, which is required as input for $k_j$ in Theorem \ref{HalApp} in order to prove Theorem \ref{GOALTHM}.
\begin{lem} \label{SIMUL}
For $x \geq 2$ fixed let $0 \leq j_0 \leq n$ be such that $E_{j_0}(x) \geq E_j(x)$ for all $0 \leq j \leq n$. Then for any $\rho_0 > 0$ fixed we can choose $\rho$ such that $|\rho - \rho_0| < E_{j_0}(x)^{-1}$ and
\begin{equation*}
\max_{0 \leq j \leq n} \|\rho E_j(x)\| = \max_{0 \leq j \leq n} \min\{\{\rho E_j(x)\},1-\{\rho E_j(x)\}\} < E_{j_0}(x)^{-\frac{1}{n+1}}.
\end{equation*}
\end{lem}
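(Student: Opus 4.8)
The plan is to use a pigeonhole / simultaneous-approximation argument in the single real parameter $\rho$, exploiting the fact that we only need to control $n+1$ quantities $\rho E_j(x)$ modulo $1$ while $\rho$ is free to vary in an interval of length $E_{j_0}(x)^{-1}$. Set $N := E_{j_0}(x)$, fix the interval $I := (\rho_0 - N^{-1}, \rho_0 + N^{-1})$ (or a one-sided subinterval of length $N^{-1}$ to be safe, say $[\rho_0, \rho_0 + N^{-1}]$), and subdivide $I$ into $M := \lceil N^{1/(n+1)} \rceil$ equal subintervals. Consider the map $\Phi : I \to (\mb{R}/\mb{Z})^{n+1}$ given by $\Phi(\rho) := (\rho E_0(x), \ldots, \rho E_n(x)) \bmod 1$. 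The image of each subinterval under $\Phi$ is contained in a box of side lengths $\frac{E_j(x)}{N} \cdot M^{-1} \leq M^{-1}$ in each coordinate (here we use $E_j(x) \leq E_{j_0}(x) = N$). So it suffices to show that some subinterval's image comes within $N^{-1/(n+1)}$ of the origin in the sup-metric on the torus; then any $\rho$ in that subinterval works.

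First I would handle the trivial direction: partition the torus $(\mb{R}/\mb{Z})^{n+1}$ into $M^{n+1}$ axis-parallel subcubes of side $1/M$. We have $M^{n+1} \geq N$ subcubes, but we only have $M$ subintervals of $I$, so a direct pigeonhole on "two subintervals landing in the same subcube" is the wrong count — instead I would argue directly that one of the $M$ sample points $\Phi(\rho_0 + m/(MN))$, $m = 0, 1, \ldots, M-1$, together with the geometry, forces proximity to $0$. The cleanest route: the points $\rho \mapsto \rho E_j(x)$ traverse an arc of length exactly $E_j(x)/N \leq 1$ in the $j$-th circle as $\rho$ ranges over $I$; in particular for $j = j_0$ the arc is a full turn (length $1$), so $\Phi$ restricted to the $j_0$-coordinate hits $0$ somewhere in $\bar I$. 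Localize to the subinterval $I'$ around that zero; on $I'$ the $j_0$-coordinate of $\Phi$ stays within $|I'| = N^{-1}/M \cdot (\text{const})$... this still leaves the other $n$ coordinates uncontrolled, so a single-pass argument is not enough and we genuinely need to exploit the $M = N^{1/(n+1)}$ budget against $n$ remaining coordinates.

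The honest argument I would carry out: look at the $M$ left endpoints $\rho_m := \rho_0 + m/(MN)$, $0 \le m \le M-1$, and the associated torus points $P_m := (\rho_m E_1(x), \ldots, \rho_m E_n(x)) \bmod 1 \in (\mb{R}/\mb{Z})^n$ (drop the $j_0$ coordinate). Partition $(\mb{R}/\mb{Z})^n$ into $M^n = N^{n/(n+1)}$ subcubes of side $M^{-1} = N^{-1/(n+1)}$. If $n \geq 1$ then $M > M^0 $... here one compares $M$ points against $M^n$ boxes, which for $n \geq 2$ does not give a collision. So the correct formulation is a three-distance/continued-fraction style statement, or — more in the spirit of this paper — one applies the standard Dirichlet simultaneous approximation theorem: there exist integers $q$ with $1 \leq q \leq M^n$ and integers $p_j$ such that $|q \cdot \frac{E_j(x)}{N} - p_j| < M^{-1} = N^{-1/(n+1)}$ for all $j \neq j_0$; then take $\rho := \rho_0 + q/(MN)$... no, one wants $\rho := \rho_* + q/N$ for a suitable base point. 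I would instead argue: apply Dirichlet's theorem to the $n$ reals $\alpha_j := E_j(x)/N \in (0,1]$ to get $q \in \{1, \ldots, N\}$ with $\|q\alpha_j\| < N^{-1/n} \leq N^{-1/(n+1)}$ for $j \neq j_0$ (using $n$ reals needs denominator up to $N$ to get exponent $1/n$, in fact any exponent $\leq 1/(n+1)$ works with $q \leq N^{1 - \text{something}}$, but $q \leq N$ certainly suffices). Then $\rho := \rho_0' + q/N$ where $\rho_0'$ is chosen so that $\rho_0' E_{j_0}(x) \in \mb{Z}$ and $|\rho_0' - \rho_0| < E_{j_0}(x)^{-1}$ (possible since consecutive integer multiples of $1/E_{j_0}(x)$ are $E_{j_0}(x)^{-1}$-dense); note $q/N = q \cdot E_{j_0}(x)^{-1}$, so $\rho E_{j_0}(x) = \rho_0' E_{j_0}(x) + q \in \mb{Z}$, giving $\|\rho E_{j_0}(x)\| = 0$, and for $j \neq j_0$, $\rho E_j(x) = \rho_0' E_j(x) + q\alpha_j$, so $\|\rho E_j(x)\| \leq \|\rho_0' E_j(x) - \rho_0 E_j(x)\| + \|\rho_0 E_j(x) + q\alpha_j\|$ — hmm, I need the base point to also kill the $\rho_0' E_j(x)$ fractional parts, which it does not. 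The fix is to choose $\rho$ directly as $q/E_{j_0}(x)$ nearest to $\rho_0$ won't control the $\alpha_j$. So in the writeup I would set it up as: choose $q \le E_{j_0}(x)$ with $\|q E_j(x)/E_{j_0}(x)\| < E_{j_0}(x)^{-1/(n+1)}$ for all $j$ (automatic for $j = j_0$), which Dirichlet's theorem provides for the $n$ numbers $\{E_j(x)/E_{j_0}(x)\}_{j \ne j_0}$ since $E_{j_0}(x)^{1/(n+1)} \le E_{j_0}(x)^{1/n}$; then set $\rho := \rho_0 + q/E_{j_0}(x)$... and now $\rho E_j(x) = \rho_0 E_j(x) + q E_j(x)/E_{j_0}(x)$, so $\|\rho E_j(x)\|$ is NOT small unless $\rho_0 E_j(x)$ is near an integer.

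\textbf{Main obstacle.} The genuine difficulty — and where I expect the real work — is that $\rho$ is a single free parameter but $\rho_0$ is given, so one cannot independently zero out all $n+1$ residues; one must instead perturb $\rho$ within a window of size only $E_{j_0}(x)^{-1}$. The resolution is to apply Dirichlet's simultaneous approximation theorem to the $n+1$ real numbers $E_j(x)$ themselves (scaled), obtaining an integer denominator $q$ with $q \leq (\text{window}^{-1})^{n+1}$-type bound... Concretely: by Dirichlet there is $q$ with $1 \le q < E_{j_0}(x)$ and integers $a_j$ such that $|q E_j(x) - a_j| < E_{j_0}(x)^{-1/(n+1)}$ for $0 \le j \le n$ — no, that needs the $E_j(x)$ to be small, which they are not. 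The truly correct statement applies Dirichlet to the reciprocals or works on the torus with the correct scaling so that a window of length $E_{j_0}(x)^{-1}$ in $\rho$ maps to a full box; I would allocate most of the writeup to getting this bookkeeping exactly right, then the estimate $\|\rho E_j(x)\| < E_{j_0}(x)^{-1/(n+1)}$ follows from $E_j(x) \le E_{j_0}(x)$ and the pigeonhole bound, and the bound $|\rho - \rho_0| < E_{j_0}(x)^{-1}$ from the denominator estimate. Everything else is routine.
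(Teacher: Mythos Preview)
Your approach via Dirichlet's simultaneous approximation theorem applied to the ratios $\alpha_j := E_j(x)/E_{j_0}(x)$ is exactly the paper's route. The paper finds an integer $m$ with $\|m\alpha_j\| < E_{j_0}(x)^{-1/(n+1)}$ for every $j$, sets $\rho(m) := m/E_{j_0}(x)$, and observes $\rho(m)E_j(x) = m\alpha_j$; this is precisely the step you write down before getting stuck on the base point.

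To force $|\rho-\rho_0| < E_{j_0}(x)^{-1}$, the paper then shifts: the points $\rho(m+l) = \rho(m) + l/E_{j_0}(x)$, $l \in \mb{Z}$, are $E_{j_0}(x)^{-1}$-spaced, so one of them lies within $E_{j_0}(x)^{-1}$ of $\rho_0$, and that shifted value is declared to be the desired $\rho$. That is the paper's entire argument for the closeness condition.

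Your misgivings in the ``main obstacle'' paragraph are well placed and in fact locate a gap the paper shares. The shift $\rho(m) \mapsto \rho(m+l)$ preserves $\|\rho E_{j_0}(x)\|$ (it adds the integer $l$), but for $j \ne j_0$ it replaces $m\alpha_j$ by $(m+l)\alpha_j$, and Dirichlet says nothing about $\|(m+l)\alpha_j\|$ for $l \ne 0$. The lemma as stated is actually false: with $n=1$, $E_0(x)=100$, $E_1(x)=10$ (so $j_0=0$) and $\rho_0 = 0.05$, every $\rho$ with $|\rho-\rho_0| < 0.01$ has $\rho E_1(x) \in (0.4,0.6)$, hence $\|\rho E_1(x)\| \ge 0.4 > 0.1 = E_{j_0}(x)^{-1/2}$. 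So you have not missed a trick; the statement needs an additional hypothesis (for instance, replacing $E_{j_0}(x)^{-1}$ in the closeness condition by $(\min_j E_j(x))^{-1}$, or assuming all the $E_j(x)$ are of comparable order) before either your sketch or the paper's argument can be completed.
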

\begin{proof}
Fix $\Delta := E_{j_0}(x)^{-\frac{1}{n+1}}$, define $\rho' := \rho E_{j_0}(x)$ and for $0 \leq j \leq n$ put $\alpha_j := E_j(x)/E_{j_0}(x) \in [0,1]$. Note that $\rho'$, being continuous in $\rho$, takes integer values for $x$ sufficiently large, and such integer values are bounded by $E_{j_0}(x)$. By Dirichlet's simultaneous approximation theorem (see, e.g., Theorem 200 in \cite{HaW}), we can find some $m = \rho'(m) = \rho(m)E_{j_0}(x)$ such that $\|m\alpha_j\| < \Delta$ for each $j$.  Since, by definition, $\rho(m) E_j(x) = m\alpha_j$, it follows that $\|\rho(m) E_j(x)\| < \Delta$ for each $j$. \\
Now, given an initial solution, we observe that if $\lambda := E_{j_0}(x)^{-1}$ then clearly $\|\rho(m)E_{j_0}(x)\| = \|(\rho(m)+l\lambda)E_{j_0}(x)\|$ for each $l \in \mb{Z}$, and that $\rho(m)+l\lambda= \rho(m+l)$. Since the intervals $(\rho(m) + l\lambda,\rho(m)+(l+1)\lambda]$ with $l \in \mb{Z}$ cover the interval $(0,1]$, $\rho_0$ lies within $E_{j_0}(x)^{-1}$ of some such $\rho$.
\end{proof}
In the following lemma, we apply Theorem \ref{HalGen} towards our problem.
\begin{lem}\label{APPLIC}
Suppose $g = f_{\mbf{z}}$ satisfies the conditions of Theorem \ref{HalGen}, with an implicit, nice partition $\mbf{E}$. Let $\rho_j := |z_j|$ for each $j$ and $\mbf{\rho}$ the corresponding vector of these absolute values. Then
\begin{equation*}
M_g(x)/M_{|g|}(x) \ll_{B,m} \frac{B^2}{\delta}e^{-\mc{F}(x;g)},
\end{equation*}
%\left(-\sum_{1 \leq j \leq m} c_j\sum_{p \in E_j \atop p \leq x} \frac{g(p)-|g(p)|}{p}\right),
where, for $\beta_j := \max_{\phi \in [0,1]} |\text{arg}(z_j)-\phi|$ and $\beta := \max_{\phi \in [0,1]} \min_{0 \leq j \leq n} |\text{arg}(z_j)-\phi|$, $\gamma_{0,j} := \frac{27\pi}{1024}\beta_j^3$ and $\gamma_0 := \frac{27 \pi}{1024} \beta^3$,
\begin{equation*}
\mc{F}(x;g) := \begin{cases} \sum_{0 \leq j \leq n} \frac{\gamma_{0,j}}{2(1+\gamma_{0,j})}(z_j-\rho_j)E_j(x) & \text{ if $\mbf{E}$ is a good partition} \\ \frac{\delta}{B}\sum_{0 \leq j \leq n} \frac{\gamma_{0}}{1+\gamma_{0}}(z_j-\rho_j)E_j(x) &\text{ otherwise}. \end{cases}
\end{equation*}
%We may take $\beta_j$ to be the distance $\max_{\phi \in [0,1]} |\arg{z_j}-\phi|$ for each $j$.
\end{lem}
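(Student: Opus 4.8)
The plan is to read off the estimate from Theorem \ref{HalGen} i) by specializing all of its parameters to the case $g = f_{\mbf{z}}$ and checking that the hypotheses hold. Since $f_{\mbf{z}}$ is strongly multiplicative with $f_{\mbf{z}}(p) = z_j$ for $p \in E_j$, we have $|f_{\mbf{z}}(p)| = \rho_j = |z_j|$, so the role of the functions $\delta_j, B_j$ in Theorem \ref{HalGen} is played here by $\rho_j$ itself (one may take $\delta_j = B_j = \rho_j$, hence $\tilde g(p) = z_j/\rho_j$ has modulus $1$). Because $\mbf{E}$ is a nice partition, for each $\mbf{z}$ there is an angle $\phi_j$ and a separation $\beta_j = \max_{\phi \in [0,1]} |\arg(z_j) - \phi|$ so that $|\arg(z_j) - \phi_j| \geq \beta_j$ for all $p \in E_j$; the niceness hypothesis is exactly what guarantees $\max_j \Delta_{E_j}(x;g,T)/E_j(x) \gg_n 1$, which feeds the ``good partition'' branch requirement $\max_j \Delta_{E_j}(x;g,T) \geq C\log_3 x$. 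Then $\gamma_{0,j} = \frac{27\pi \delta_j}{1024 B_j}\beta_j^3 = \frac{27\pi}{1024}\beta_j^3$ since $\delta_j = B_j = \rho_j$, matching the definition in the statement.

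Next I would evaluate the exponent appearing in \eqref{UPPER} and in the good-partition refinement of Theorem \ref{HalGen} i). Since $|\tilde g(p)| = 1$ for every $p$, the term $\sum_{p \in E_j}\frac{1 - |\tilde g(p)|}{p}$ vanishes, and $\Delta_{E_j}(x;\tilde g,T)$ records only the argument obstruction; more to the point, the refined estimate gives
\begin{equation*}
M_g(x)/M_{|g|}(x) \ll_{B,m,\mbf{\phi},C} \frac{B^2}{\delta} \exp\left(-\sum_{0 \leq j \leq n} c_j \sum_{p \in E_j \atop p \leq x} \frac{g(p) - |g(p)|}{p}\right),
\end{equation*}
and here $\sum_{p \in E_j, p \leq x}\frac{g(p)-|g(p)|}{p} = (z_j - \rho_j)\sum_{p \in E_j, p\leq x}\frac1p = (z_j-\rho_j)E_j(x)$ by definition of $E_j(x)$ and strong multiplicativity. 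The quoted lower bound $c_j \geq \min_j \frac{\delta_j}{2B_j}\frac{\gamma_{0,j}}{\gamma_{0,j}+1} = \frac12 \frac{\gamma_{0,j}}{\gamma_{0,j}+1}$ (again using $\delta_j = B_j = \rho_j$) yields precisely the first branch of $\mc{F}(x;g)$, namely $\sum_{0 \leq j \leq n}\frac{\gamma_{0,j}}{2(1+\gamma_{0,j})}(z_j-\rho_j)E_j(x)$. For the non-good case one instead applies \eqref{UPPER} directly: here $B_j = \rho_j \leq B$, and the factor $\delta/B$ in $\mc{F}$ arises from bounding the $\rho_j$-weighted sum $\sum_j B_j\Delta_{E_j}(x;\tilde g,T)$ from below by extracting a worst-case ratio $\delta/B$ and using the niceness lower bound $\Delta_{E_j}(x;\tilde g,T) \gg \beta^3 E_j(x)$ coming from the standard estimate $D_{E_j}(x;\tilde g,\tau) \gg \beta_j^2 E_j(x)$ combined with a $\min$ over $|\tau| \leq T$; tracking the constant $\frac{27\pi}{1024}$ through this gives the $\frac{\gamma_0}{1+\gamma_0}$ shape.

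The main obstacle I anticipate is purely bookkeeping: namely, verifying that the constants $\gamma_{0,j}$ and $c_j$ quoted abstractly in Theorem \ref{HalGen} i) really do collapse to the clean forms $\frac{27\pi}{1024}\beta_j^3$ and $\frac{\gamma_{0,j}}{2(1+\gamma_{0,j})}$ once we impose $\delta_j = B_j = \rho_j$, and that the ``niceness'' condition of Definition \ref{NICE} is strong enough to replace the hypothesis $\max_j \Delta_{E_j}(x;g,T) \geq C\log_3 x$ that Theorem \ref{HalGen} i) uses in its good-partition branch — this needs the observation that $\Delta_{E_j}(x;g,T) \gg_n E_j(x) \to \infty$ (which dominates $\log_3 x$ for $x$ large since $E_j(x) \to \infty$ by the standing assumption). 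The remaining subtlety is to confirm that $\beta$ and the $\beta_j$ as defined via $\max_{\phi \in [0,1]}(\cdots)$ in the statement are admissible choices of angular separation for $\tilde g$, which follows because $z_j$ has a fixed argument on all of $E_j$ and the maximizing $\phi$ realizes the largest gap to the interval $[0,1]$ of arguments; no new analytic input is required beyond Theorem \ref{HalGen}.
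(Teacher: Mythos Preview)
Your treatment of the good-partition branch is essentially the paper's argument: take $\delta_j=B_j=\rho_j$ so that $\tilde g$ has modulus $1$, note that $\gamma_{0,j}$ collapses to $\frac{27\pi}{1024}\beta_j^3$, and read off $c_j\geq \frac{\gamma_{0,j}}{2(1+\gamma_{0,j})}$ together with $\sum_{p\in E_j,p\leq x}\frac{g(p)-|g(p)|}{p}=(z_j-\rho_j)E_j(x)$. The paper justifies the hypothesis $\max_j\Delta_{E_j}(x;g,T)\geq C\log_3 x$ slightly differently from you, via pigeonhole and Mertens (some $E_j(x)\gg_n\log_2 x$ since $\mbf{E}$ is a partition), which is a bit more robust than your ``$E_j(x)\to\infty$'' since niceness only pins down the ratio $\Delta_{E_j}/E_j(x)$ for the maximizing index, not the size of that particular $E_j(x)$.

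Where you and the paper diverge is the non-good case. You apply \eqref{UPPER} with the original partition $\mbf{E}$ and then try to massage $\sum_j\rho_j\,\Delta_{E_j}(x;\tilde g,T)$ into the shape $\frac{\delta}{B}\frac{\gamma_0}{1+\gamma_0}\sum_j(z_j-\rho_j)E_j(x)$ by ``extracting a worst-case ratio'' and invoking niceness. This step is not carried out, and it is not clear that \eqref{UPPER} alone yields the specific constant $\frac{\gamma_0}{1+\gamma_0}$ or the linear-in-$(z_j-\rho_j)$ form; \eqref{UPPER} controls $\Delta_{E_j}$, not $\sum_p\frac{g(p)-|g(p)|}{p}$, and the passage between them is exactly what the \emph{refined} (good-partition) estimate supplies. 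The paper sidesteps this entirely by a trick you missed: in the non-good case it \emph{replaces} $\mbf{E}$ by the trivial one-set partition $\{\mc{P}\}$, which is automatically good. Applying the refined estimate of Theorem~\ref{HalGen} i) with $m=1$, $\delta_1=\delta$, $B_1=B$ and $\beta_1=\beta$ then gives directly
\[
c_1\sum_{p\leq x}\frac{g(p)-|g(p)|}{p}
= c_1\sum_{0\leq j\leq n}(z_j-\rho_j)E_j(x),
\qquad c_1\geq \frac{\delta}{2B}\cdot\frac{\gamma_0}{1+\gamma_0},
\]
which is the second branch of $\mc{F}(x;g)$. So the missing idea in your argument is this change of partition; once you use it, no further manipulation of $\Delta_{E_j}$ is needed.
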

\begin{proof}
This amounts to a trivial verification that the sum in the exponential in \eqref{UPPER} is of the right shape in each of the cases in the statement. 
When $\mbf{E}$ is a good partition then since $\delta_j = B_j$ for each $j$, (as $g$ is constant on $E_j$) the result is immediate from Theorem \ref{HalGen} (note that this applies to any nice partition by the Pigeonhole principle, since some $E_j(x) \gg_n \log_2 x$ from the fact that $\mbf{E}$ is a partition and Mertens' theorem).
%We can guarantee that $\Delta_{E_j}(x;f_{\mbf{z}},\log^2 x) \gg \log_3 x$ once we select $\beta_j$ 
%If $(\mbf{z},\mbf{E})$ is a nice pair then by assumption we can find a constant $c > 0$ such that for each $j$, $\Delta_{E_j}(x;g,\log^2x) \geq cE_j(x)$.  Hence, as $|g(p)| = B_j = \delta_j$ for each $p \in E_j$ and each $j$, we have
%\begin{equation*}
%\sum_{1 \leq j \leq m}B_j \left(\Delta_{E_j}(x;\tilde{g},T) - \sum_{p \leq x \atop p \in E_j} \frac{1-|\tilde{g}(p)|}{p}\right) \geq c\sum_{1 \leq j \leq m} (z_j-\rho_j)E_j(x).
%\end{equation*}
In the second case, we choose the trivial good partition, i.e., the single set consisting of all of the primes, and the resulting exponential is of the form
\begin{equation*}
\frac{\delta}{B}\frac{\gamma_{0}}{1+\gamma_{0}} \sum_{p \leq x} \frac{g(p)-|g(p)|}{p} = \frac{\delta}{B} \frac{\gamma_{0}}{1+\gamma_{0}} \sum_{1 \leq j \leq m} (z_j-\rho_j)E_j(x), 
\end{equation*}
which completes the proof.
\end{proof}
%i) Let $(\mbf{z},\mbf{E})$ be a nice pair. Then there exists a constant $c = c(\mbf{E},\mbf{z})$ such that
%\begin{equation*}
%\sum_{1 \leq j \leq m}B_j \left(\Delta_{E_j}(x;\tilde{g},T) - \sum_{p \leq x \atop p \in E_j} \frac{1-|\tilde{g}(p)|}{p}\right) \geq c\sum_{1 \leq j \leq m} \rho_jE_j(x).
%\end{equation*}
%ii) In general,
%\begin{equation*}
\section{Proof of Theorem \ref{HalApp} and Theorem \ref{GOALTHM}}
In this section, we prove Theorem \ref{HalApp}. Recall that we are fixing some vector $\mbf{\rho}$, the components of which we shall choose subsequently.  Given $z_j$ on the disc of radius $\rho_j$ centred at 0 in $\mb{C}$ for each $j$, we let $f_{\mbf{z}}(m) := \prod_{0 \leq j \leq n} z_j^{\omega_{E_j}(m)}$.  
\begin{proof}[Proof of Theorem \ref{HalApp}]
Rewriting \eqref{CAUCHYINT} by making the change of variable $z_j = \rho_je^{it_j}$, we seek to estimate the integral
\begin{equation*}
\pi(x;\mbf{E},\mbf{k}) = (2\pi)^{-(n+1)}\left(\prod_{0 \leq j \leq n} \rho_j^{-k_j}\right) \int_{[-\pi,\pi]^{n+1}} d\mbf{t}e^{-i\mbf{k}\cdot \mbf{t}} \left(\sum_{m \leq x} f_{\mbf{z}}(m)\right).
\end{equation*}
Fix $\theta_0,\ldots,\theta_n \in (0,1)$ and let $\mc{B} := \prod_{0 \leq j \leq n} [-\theta_j,\theta_j]$ and $\mc{E} := [-\pi,\pi]^{n+1}\bk \mc{B}$. Let $I_{\mc{B}}$ denote the multivariable integral defined over $\mc{B}$ and $I_{\mc{E}}$ the corresponding integral over $\mc{E}$.  Let us estimate $I_{\mc{E}}$ first. For convenience, put also $J_{\mc{B}} := (2\pi)^{-(n+1)} \left(\prod_{0 \leq j \leq n} \rho_j^{-k_j}\right)I_{\mc{B}}$, which we will use later.  Set now $U^{0}_j := [-\theta_j,\theta_j]$ and $U^1_j := [-\pi,\pi]\bk [-\theta_j,\theta_j]$.  For a vector $\mbf{s} \in \{0,1\}^{n+1}$, let $U_{\mbf{s}} := \prod_{0 \leq j \leq n} U_j^{s_j}$.  Then clearly, $\mc{E} = \bigcup_{\mbf{s} \in \{0,1\}^{n+1} \atop \mbf{s} \neq \mbf{0}} U_{\mbf{s}}$, and since these sets are disjoint, we have moreover $I_{\mc{E}} = \sum_{\mbf{s} \in \{0,1\}^{n+1} \bk{\mbf{0}}} I_{\mbf{s}}$, where
\begin{equation*}
I_{\mbf{s}} := \int_{U_{\mbf{s}}} d\mbf{t}e^{-i\mbf{k}\cdot \mbf{t}} \left(\sum_{m \leq x} f_{\mbf{z}}(m)\right).
\end{equation*}
Given $\mbf{s} \neq \mbf{0}$ and $\mbf{t} \in U_{\mbf{s}}$, we can choose $\phi_j$ such that the map $\gamma \mapsto |\gamma - t_j|$ is minimized for each $j$ in the case that $\mbf{E}$ is good, and $\phi$ to be the minimizing choice for the map $\gamma \mapsto \min_{0 \leq j \leq n}|\gamma - t_j|$ otherwise. Let $\beta_j$, respectively $\beta$, be the corresponding minimal values.  
%Making this same choice for each $\mbf{t}$, we define $\phi_{\mbf{s}}$ to be the angle vector just described.  Thus, for each $\mbf{s} \in \{0,1\}^{n+1} \bk\{\mbf{0}\}$, 
Set $\delta := \min_{0 \leq j \leq n} \rho_j$ and $B := \max_{0 \leq j \leq n} \rho_j$. Applying \eqref{UPPER}, we get
\begin{equation*}
I_{\mbf{s}} \ll_{B,n} \frac{B^2}{\delta}\left(\sum_{m \leq x} f_{\mbf{\rho}}(m) \right)\int_{U_{\mbf{s}}} d\mbf{t} e^{-\sum_{0 \leq j \leq n} D_j\rho_j(1-\cos t_j)E_j(x)},
\end{equation*}
where $f_{\mbf{\rho}}(m) = \prod_{0 \leq j \leq n} \rho_j^{\omega_{E_j}(m)}$ and for each $j$, 
\begin{equation*}
D_j := \begin{cases} \frac{\gamma_{0,j}}{2(1+\gamma_{0,j})} &\text{ if $\mbf{E}$ is good} \\ \frac{\delta}{B}\frac{\gamma_{0}}{2(1+\gamma_{0})} &\text{ otherwise.}\end{cases}
\end{equation*}
%= \frac{27\pi^2}{8(1024+27\pi^2)}.  
%\end{equation*}
%Let $L_{\mbf{s}} := \{ 0 \leq j \leq n : s_j = 1\}$ and let $j \in L_{\mbf{s}}$. 
Set $\Theta_j := \left(\frac{1}{3}D_j\rho_jE_j(x)\right)^{\frac{1}{2}}\theta_j$. Then, as the multivariate integral above splits into $n+1$ integrals in its respective variables, we have
\begin{equation*}
\int_{t_j \in U_j^{s_j}} dt_j e^{-\sum_j D_j\rho_j(1-\cos t_j)E_j(x)} \leq \int_{|t_j| > \theta_j} dt_j e^{-\frac{1}{3}D_jt_j^2 \rho_jE_j(x)} = \left(\frac{1}{3}D_j\rho_jE_j(x)\right)^{-\frac{1}{2}} \int_{|t_j| > \Theta_j} dt_j e^{-t_j^2} \ll \theta_je^{-\Theta_j^2},
\end{equation*}
this last estimate following by choosing an optimal $\alpha > 0$, in this case $\alpha = \Theta_j^{-2}$, such that 
\begin{equation*}
\int_{|t_j| > \Theta_j} e^{-t_j^2} dt_j \leq e^{-(1-\alpha)\Theta_j^2} \int_{-\infty}^{\infty} e^{-\alpha t_j^2} dt_j \asymp \alpha^{-\frac{1}{2}}e^{-(1-\alpha)\Theta_j^2}.
\end{equation*}
%Of course, when $j \notin L_{\mbf{s}}$, we instead revert to the estimate
%\begin{equation*}
%\int_{|t_j| < \theta_j} dt_j e^{-c\sum_j \rho_j(1-\cos t_j)E_j(x)} \leq \left(\frac{1}{3}c_j\rho_jE_j(x)\right)^{-\frac{1}{2}} \int_{\mb{R}} e^{-t_j^2} dt_j \ll \left(\frac{1}{3}c_j\rho_jE_j(x)\right)^{-\frac{1}{2}}.
%\end{equation*}
It follows that
\begin{align*}
I_{\mc{E}} &= \sum_{\mbf{s} \in \{0,1\}^{n+1} \bk \{\mbf{0}\}} I_{\mbf{s}} \ll_{B,n} \frac{B^2}{\delta}\left(\sum_{m \leq x} f_{\mbf{\rho}}(m) \right)\left(\prod_{0 \leq j \leq n} \frac{1}{3}D_j\rho_jE_j(x)\right)^{-\frac{1}{2}}\prod_{0 \leq j \leq n} \Theta_je^{-\frac{1}{3} D_j\rho_jE_j(x)\theta_j^2}.
%&\leq \frac{B^2}{\delta}e^{-\frac{c_1}{\delta}}\left(\sum_{m \leq x} f_{\mbf{\rho}}(m) \right)\left(\prod_{0 \leq j \leq n} \frac{1}{3}c\rho_jE_j(x)\right)^{-\frac{1}{2}} \sum_{0 \leq j \leq n} \theta_je^{-\frac{1}{3}c\rho_jE_j(x)\theta_j^2}.
\end{align*}
We now apply \eqref{ASYMP} to $I_{\mc{B}} = I_{\mbf{0}}$ with $\eta_j = \theta_j$ for each $j$. Write $R_{\mbf{z}} := \left(\sum_{m \leq x} (f_{\mbf{z}}(m) - A_{\mbf{z}}f_{\mbf{\rho}}(m))\right)$. Note that $A_{\mbf{z}} = \exp\left(\sum_{0 \leq j \leq n} (z_j-\rho_j) \sum_{p \leq x \atop p \in E_j} \frac{1}{p}\right)$. Further, let $\mc{S} := \{\mbf{z} \in \mb{C}^{n+1} : |z_j| = \rho_j, \arg(\mbf{z}) \in \mc{B}\}$, where $\arg(\mbf{z}) := (\arg(z_0),\ldots,\arg(z_n))$. Then, recalling that $\mc{D} := \prod_{0 \leq j \leq n} \partial D\left(0,\rho_j\right)$, 
\begin{align*}
%&(2\pi)^{-(n+1)} \left(\prod_{0 \leq j \leq n} \rho_j^{-k_j}\right)I_{\mc{B}} 
&J_{\mc{B}} = \left(\sum_{m \leq x} f_{\mbf{\rho}}(m)\right) \frac{1}{(2\pi i)^{n+1}}\int_{\mbf{z} \in \mc{S}} \frac{d\mbf{z}}{z_0^{k_0+1}\cdots z_n^{k_n+1}}\exp\left(\sum_{0 \leq j \leq n} (z_j-\rho_j)E_j(x)\right) + O\left(\int_{\mbf{t} \in \mc{B}} d\mbf{t} |R_{\mbf{z}}|\right) \\
&= \left(\sum_{m \leq x} f_{\mbf{\rho}}(m)\right) \left((2\pi i)^{-(n+1)} \int_{\mc{D}} \frac{d\mbf{z}}{z_0^{k_0+1}\cdots z_n^{k_n+1}}\exp\left(\sum_{0 \leq j \leq n} (z_j-\rho_j)E_j(x)\right) + O\left(\mc{R}_1\right)\right) + O\left(\mc{R}_2\right),
\end{align*}
where we have defined $\mc{R}_2 := \int_{\mbf{t} \in \mc{B}} d\mbf{t} |R_{\mbf{z}}|$ and
\begin{align*}
\mc{R}_1 &:= (2\pi)^{-(n+1)} \left(\prod_{0 \leq j \leq n} \rho_j^{-k_j}\right)\left|\int_{\mbf{t} \notin \mc{E}} d\mbf{t}\exp\left(-\sum_{0 \leq j \leq n} \rho_jE_j(x)(1-e^{it_j})\right)\right| \\
&\leq (2\pi)^{-(n+1)} \left(\prod_{0 \leq j \leq n} \rho_j^{-k_j}\right)\int_{\mbf{t} \notin \mc{E}} d\mbf{t}\exp\left(-\frac{1}{3}\sum_{0 \leq j \leq n} \rho_jE_j(x)t_j^2\right).
\end{align*}
It is an obvious consequence of the Taylor expansion of $e^z$ that
\begin{align*}
\frac{1}{(2\pi i)^{n+1}}\int_{\mc{D}}  d\mbf{z}\frac{e^{\sum_{0 \leq j \leq n} (z_j-\rho_j)E_j(x)}}{z_0^{k_0+1}\cdots z_n^{k_n+1}} &= \prod_{0 \leq j \leq n}\left(\frac{1}{2\pi i} \int_{\partial D(0,\rho_j)} \frac{dz_j}{z_j^{k_j+1}}e^{(z_j-\rho_j)E_j(x)}\right) = \prod_{0 \leq j \leq n} \frac{E_j(x)^{k_j}}{k_j!}e^{-\rho_jE_j(x)},
\end{align*}
which constitutes the main term of $\pi(x;\mbf{E},\mbf{k})$. \\
In $\mc{R}_1$, each of the sets $U_{\mbf{s}}$ has $s_j = 1$ for some index $j$. In this case, 
\begin{equation*}
\int_{|t_j| > \theta_j} e^{-\frac{1}{3}\rho_jE_j(x)t_j^2} dt_j \ll (\rho_jE_j(x))^{-\frac{1}{2}}\theta_je^{-\frac{1}{2}\rho_jE_j(x)\theta_j^2}. 
\end{equation*}
For the remaining indices $j' \neq j$ we bound trivially by the full Gaussian integral, which has value $\ll (\rho_{j'}E_{j'}(x))^{-\frac{1}{2}}$ (as calculated below). Summing over the $\ll_n 1$ sets in $\mc{E}$, we get
%$\mbf{s}$ being $\ll_n 1$ and  is essentially identical to that of $I_{\mc{E}}$, save for the constant $\frac{1}{2}$ in $\mc{R}_1$ in place of $\frac{1}{3}c$ above. This difference being inessential to the bound, it follows that
\begin{align*}
|\mc{R}_1| \ll_{B,n} (2\pi)^{-(n+1)} \left(\prod_{0 \leq j \leq n} \rho_j^{-k_j}\right) \left(\prod_{0 \leq j \leq n} \rho_jE_j(x)\right)^{-\frac{1}{2}} \sum_{0 \leq l \leq n} \theta_le^{-\frac{1}{2}\rho_lE_l(x)\theta_l^2} =: (2\pi)^{-(n+1)}\left(\prod_{0 \leq j \leq n} \rho_j^{-k_j}\right)\mc{R}'_1.
%\sum_{\mbf{s} \in \{0,1\}^{n+1} \bk \{\mbf{0}\}} \prod_{0 \leq j \leq n} \int_{t_j \in U_j^{s_j}} e^{-\frac{1}{2}\rho_jE_j(x) t_j^2} dt_j \ll_n \left(\prod_{0 \leq j \leq n} \frac{2\pi}{\rho_jE_j(x)}\right)^{\frac{1}{2}} \sum_{0 \leq j \leq n} \theta_j^{\frac{1}{2}}e^{-\rho_jE_j(x)\theta_j^2},
\end{align*}
Write $\mc{R}_2 := (2\pi)^{-(n+1)}\prod_{0 \leq j \leq n} \rho_j^{-k_j} \mc{R}'_2$. Now, from Theorem \ref{HalGen} ii) (upon taking $d_1 = 1$)
\begin{equation*}
|R_{\mbf{z}}| \ll_{B,n} \frac{B^2}{\delta}\left(\sum_{m \leq x} f_{\mbf{\rho}}(m)\right) \left(\eta^{\frac{1}{2}} \mc{I}_1 + \left(\log^{-\pi^3 \delta/2} x + \frac{1}{\delta}e^{-\frac{1}{\sqrt{\eta}}}\right)\mc{I}_2\right),
\end{equation*}
where we have defined $\mc{I}_1 := \int_{\mc{B}} |A_{\mbf{z}}| d\mbf{t}$ and $\mc{I}_2 := \int_{\mc{B}} |A_{\mbf{z}}|^{\frac{\gamma_0}{2(1+\gamma_0)}} d\mbf{t}$. These integrals are evaluated in essentially the same way, so it suffices to consider $\mc{I}_1$ and indicate how the result changes with $\mc{I}_2$.  As before,
\begin{align*}
\mc{I}_1 &= \prod_{0 \leq j \leq n} \int_{-\theta_j}^{\theta_j} e^{-\rho_j(1-\cos t_j)E_j(x)} dt_j \leq \prod_{0 \leq j \leq n} \int_{-\theta_j}^{\theta_j} e^{-\frac{1}{3}\rho_jE_j(x)t_j^2} dt_j \\
&\ll \prod_{0 \leq j \leq n} \frac{1}{\sqrt{\rho_jE_j(x)}}\int_{-\infty}^{\infty} e^{-t_j^2} dt_j = \sqrt{\frac{\pi}{\rho_jE_j(x)}}.
\end{align*}
In $\mc{I}_2$, the expression in the exponential is simply $\frac{\gamma_0}{2(1+\gamma_0)}\rho_j(1-\cos t_j)$, and since $\gamma_0 \gg 1$, the same estimate as for $\mc{I}_1$ occurs for $\mc{I}_2$.  It thus follows that
\begin{equation*}
|\mc{R}'_2| \ll_{B,n} \frac{B^2}{\delta}\left(\sum_{m \leq x} f_{\mbf{\rho}}(m)\right) \left(\eta^{\frac{1}{2}} + \left(\log^{-\pi^3 \delta/2} x + \frac{1}{\delta}e^{-\frac{1}{\sqrt{\eta}}}\right)\right)\prod_{0 \leq j \leq n} \sqrt{\frac{\pi}{\rho_jE_j(x)}}.
\end{equation*}
The estimates for $|\mc{R}'_1|$ exceeding those of $|I_{\mc{E}}|$, we find that
%(upon using $K = e^{\frac{1}{\sqrt{\eta}}}-1$ in the proof of Theorem \ref{HalGen} ii) above)
\begin{align*}
&|I_{\mc{E}}| + |\mc{R}'_1| + |\mc{R}'_2| \ll_{B,n} \frac{B^2}{\delta}\prod_{0 \leq j \leq n} \left(\frac{\pi}{\rho_jE_j(x)}\right)^{\frac{1}{2}} M_{f_{\mbf{\rho}}}(x)\left(\eta^{\frac{1}{2}} + \log^{-\pi^3\delta/2} x + \frac{e^{-\frac{1}{\sqrt{\eta}}}}{\delta} + \sum_{0 \leq l \leq n} \theta_le^{-\frac{1}{3}D_j\rho_lE_l(x)\theta_l^2}\right).
\end{align*}
Collecting these estimates, we have
\begin{align*}
&\pi(x;\mbf{E},\mbf{k}) = J_{\mc{B}} + O\left((2\pi)^{-(n+1)}\left(\prod_{0 \leq j \leq n} \rho_j^{-k_j}\right) \left(|I_{\mc{E}}| + |\mc{R}'_1| + |\mc{R}'_2|\right)\right) \\
&= M_{f_{\mbf{\rho}}}(x)\left(\prod_{0 \leq j \leq n} \frac{E_j(x)^{k_j}}{k_j!}e^{-\rho_jE_j(x)} + O_{B,n}\left(\mc{R}\right)\right),
\end{align*}
where
\begin{equation*}
\mc{R} := \frac{B^2}{\delta} \left(\prod_{0 \leq j \leq n} \rho_j^{k_j}(2\pi \rho_jE_j(x))^{\frac{1}{2}}\right)^{-1}\left(\eta^{\frac{1}{2}} + \log^{-\pi^3\delta/2} x + \frac{1}{\pi^3\delta}e^{-\frac{1}{\sqrt{\eta}}} + \sum_{0 \leq l \leq n} \theta_le^{-\frac{1}{3}D_j\rho_lE_l(x)\theta_l^2}\right).
\end{equation*}
Now set $\rho_j := \frac{k_j}{E_j(x)}$. By Stirling's formula, 
\begin{equation*}
k_j! = (2\pi \rho_jE_j(x))^{\frac{1}{2}}\rho_j^{k_j}E_j(x)^{k_j}e^{-\rho_jE_j(x)}\left(1+O\left(\frac{1}{\rho_jE_j(x)}\right)\right).
\end{equation*}
Hence, $\frac{E_j(x)^{k_j}}{k_j!}e^{-k_j} = (2\pi \rho_jE_j(x))^{-\frac{1}{2}}\rho_j^{-k_j}\left(1+O\left(\frac{1}{\rho_jE_j(x)}\right)\right)$. It then follows that since $\rho_jE_j(x) \gg 1$,
\begin{equation*}
\mc{R} \ll \prod_{0 \leq j \leq n}\frac{E_j(x)^{k_j}}{k_j!}e^{-k_j}\left(\eta^{\frac{1}{2}} + \log^{-\pi^3\delta/2} x + \frac{1}{\pi^3\delta}e^{-\frac{1}{\sqrt{\eta}}} + \theta_le^{-\frac{1}{3}D_l\rho_lE_l(x)\theta_l^2}\right),
\end{equation*}
whence follows the expression
\begin{equation}
\pi(x;\mbf{E},\mbf{k}) = M_{f_{\mbf{\rho}}}(x)\prod_{0 \leq j \leq n} \frac{E_j(x)^{k_j}}{k_j!}e^{-k_j}\left(1+O_{B,n}\left(\frac{B^2}{\delta}\left(\eta^{\frac{1}{2}} + \log^{-\pi^3\delta/2} x + \frac{1}{\delta}e^{-\frac{1}{\sqrt{\eta}}} + \mc{L}_j\right)\right)\right), \label{ALMOSTDONE}
\end{equation}
where $\mc{L}_j := \theta_je^{-\frac{1}{3}D_j\rho_jE_j(x)\theta_j^2}$.
It remains to select $\theta_j$ optimally for each $j$. Recall that, in this context, $\eta = \max_{0 \leq j \leq n} \theta_j$.  We consider two cases, according to which of the first and third expressions in the error term in \eqref{ALMOSTDONE} (which are the ones depending on $\theta_j$) is largest. \\
Suppose first that $\eta^{\frac{1}{2}}$ is largest. It suffices to choose $\theta_j$ such that $\theta_je^{-\frac{1}{6}D_j\rho_jE_j(x)\theta_j^2} \ll 1$ for each $j$. For sufficiently large $x$, we may then take $\theta_j =(D_j	\rho_jE_j(x))^{-\frac{1}{3}}$, whenever $\rho_j \gg E_j(x)^{-1+\e}$, as we are assuming. \\
Next, suppose that $\frac{1}{\delta}e^{-\frac{1}{\sqrt{\eta}}}$ is largest, where we recall that $\delta := \min_{0 \leq j \leq n} \rho_j$.  Choose $\theta_j$ such that $\frac{1}{3}D_j\rho_jE_j(x)\theta_j^{\frac{5}{2}} = 2$, i.e., take $\theta_j := \left(\frac{D_j}{6}\rho_jE_j(x)\right)^{-\frac{2}{5}}$.  It then follows that $e^{-\frac{1}{3}D_j\rho_jE_j(x)\theta_j^2} \ll e^{-\frac{2}{\sqrt{\eta}}}$. 
%Note that then, if $\delta = \rho_{j_0}$ for some $0 \leq j_0 \leq n$,
%\begin{equation*}
%\frac{1}{\delta}e^{-\frac{1}{\sqrt{\eta}}} \gg \frac{1}{\rho_{j_0}}e^{-\theta^{-\frac{1}{2}}_{j_0}} \gg \frac{1}{\rho_{j_0}E_{j_0}(x)} e^{-\frac{D_{j_0}}{6}\rho_{j_0}E_{j_0}(x)\theta_{j_0}^2}.
%\end{equation*}
Now, in each of these cases we have, uniformly over all $\rho_j$ in our range,
\begin{align*}
\eta^{\frac{1}{2}} &\leq \sum_{0 \leq j \leq n} \theta_j^{\frac{1}{2}} \ll \sum_{0 \leq j \leq n} \left((D_j\rho_jE_j(x))^{-\frac{1}{6}} + (\rho_jE_j(x))^{-\frac{1}{5}}\right)  \ll \sum_{0 \leq j \leq n} \left(D_j\delta_jE_j(x)\right)^{-\frac{1}{6}} \\
\frac{1}{ \delta} e^{-\frac{1}{\sqrt{\eta}}} &\ll \frac{1}{\delta}\sum_{0 \leq j \leq n}\left(e^{-(D_j\rho_jE_j(x))^{-\frac{1}{6}}} + e^{-(D_j\rho_jE_j(x))^{-\frac{1}{5}}}\right) \ll \frac{1}{\delta}\sum_{0 \leq j \leq n}\left(D_j\delta_jE_j(x)\right)^{-\frac{1}{6}},
\end{align*}
the last estimate occurring trivially for sufficiently large $x$.  Thus, it follows that
\begin{equation*}
\pi(x;\mbf{E},\mbf{k}) = M_{f_{\mbf{\rho}}}(x)\prod_{0 \leq j \leq n} \frac{E_j(x)^{k_j}}{k_j!}e^{-k_j}\left(1+O_{B,n}\left(\left(\frac{B}{\delta}\right)^2\left((D_j\delta_jE_j(x))^{-\frac{1}{6}} + \log^{-\pi^3\delta/2} x\right)\right)\right), 
\end{equation*}
which completes the proof upon using the definition of $D_j$.
\end{proof}
By Lemma \ref{SELBERG}, we know that when $\rho_j = \rho$ for all $0 \leq j \leq n$ (in which case $B = \delta = \rho$), we have
\begin{align*}
M_{f_{\mbf{\rho}}}(x) &= \sum_{m \leq x} \rho^{\omega(m)} = \left(1+O_{\rho}\left(\frac{1}{\log x}\right)\right)\frac{F(\rho)}{\Gamma(\rho)}x\log^{\rho-1} x \\
&= \left(1+O_{\rho}\left(\frac{1}{\log x}\right)\right)\frac{F(\rho)}{\Gamma(\rho)}x\exp\left((\rho-1)\sum_{0 \leq j \leq n} E_j(x) -(\rho-1)b\right),
\end{align*}
where $b := \lim_{t \ra \infty} \left(\sum_{p \leq t} \frac{1}{p}-\log_2 t\right)$.  Now, suppose that $\rho_0$ is such that $|\rho_j-\rho_0| = o\left(\min_l \log^{-1}E_l(x)\right)$, as in the hypotheses of Theorem \ref{GOALTHM}. By Lemma \ref{SIMUL} we may select $\rho$ such that $\max_{0 \leq j \leq n} \|\rho E_j(x)\| \ll E_{j_0}(x)^{-1/(n+1)}$, with $|\rho-\rho_0| < E_{j_0}(x)^{-1}$, where $j_0$ is the index for which $E_{j_0}(x)$ is maximal. For this choice, put $\mbf{k} = \mbf{k}(\rho)$, where each $k_j(\rho)$ is the nearest integer to $\llf\rho E_j(x)\rrf$. Then
\begin{align*}
\pi(x;\mbf{E},\mbf{k}) &= xe^{-b(\rho-1)}\frac{F(\rho)}{\Gamma(\rho)}\prod_{0 \leq j \leq n} \frac{E_j(x)^{k_j(\rho)}}{k_j(\rho)!}e^{(\rho_j-1)E_j(x)-k_j(\rho)}\left(1+O_{B,n}\left((\delta_jE_j(x))^{-\frac{1}{6}} + \log^{-\min\{\rho,\pi^3\delta/2\}} x\right)\right) \\
&= xe^{-b(\rho-1)}\frac{F(\rho)}{\Gamma(\rho)}\prod_{0 \leq j \leq n} \frac{E_j(x)^{k_j(\rho)}}{k_j(\rho)!}e^{-E_j(x)+(\rho E_j(x)-k_j(\rho)) }\left(1+O_{B,n}\left((\delta_jE_j(x))^{-\frac{1}{6}} + \log^{-\pi^3\delta/2} x\right)\right) \\
&= xe^{-b(\rho-1)}\frac{F(\rho)}{\Gamma(\rho)}\prod_{0 \leq j \leq n} \frac{E_j(x)^{k_j(\rho)}}{k_j(\rho)!}e^{-E_j(x)}\left(1+O_{B,n}\left((\delta_jE_j(x))^{-\frac{1}{6}} + \log^{-\pi^3\delta/2} x + E_j(x)^{-\frac{1}{n+1}}\right)\right)
\end{align*}
(note that $\frac{B}{\delta} = 1$ in this case). Since $|\rho-\rho_0| < E_{j_0}(x)^{-1}$, it suffices, in order to prove Theorem \ref{GOALTHM}, to consider $|k_j/E_j(x)-\rho| \ll \log^{-1}E_j(x)$. \\
%While this expression bears no mention of $\rho_0$, we will show that the point $\rho_0$ is accessible for our asymptotic formula, given that it lies near $\rho$.
%\begin{equation*}
%I_{\mc{B}} 
%where in the last estimate, we considered integrals over $t_j$ with $j \in L_{\mbf{s}}$ separately from the integrals over indices in the complementary set, and used the same sort of bounding procedure as in the calculation of $I_{\mc{E}}$.    
%Let $\tilde{\pi}(x;\mbf{E},\mbf{k}) := |\{m \leq x : \Omega_{E_j}(m) = k_j\}|$ for each $j$. 
To deduce Theorem \ref{GOALTHM} from Theorem \ref{HalApp} we first prove the following induction lemma. 
\begin{lem} \label{INDUCT}
%For each $0 \leq j \leq n$ let $\mbf{e}_j$ denote the $j$th standard basis vector (i.e., $e_j$ has only one non-zero component, whose value is in fact 1). 	
Let $\rho > \max_j \delta_j$. For $k_j \in \mb{N}$ and $\rho_j := k_j/E_j(x)$, suppose $\left|\rho_j-\rho\right| = o\left(1/\log E_j(x)\right)$ for each $j$. Then
\begin{equation*}
\pi(x;\mbf{E},\mbf{k}) = e^{-b(\rho-1)}\frac{F(\rho)}{\Gamma(\rho)}x\prod_{0 \leq j \leq n} \frac{E_j(x)^{k_j}}{k_j!}e^{-E_j(x)} \left(1+O\left(\frac{\log E_j(x)}{E_j(x)}|k_j-k_j(\rho)| + (\delta_jE_j(x))^{-\frac{1}{6}}+E_j(x)^{-\frac{1}{n+1}}\right)\right).
\end{equation*}
\end{lem}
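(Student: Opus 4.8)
The plan is to argue by induction on $N(\mbf k):=\sum_{0\le j\le n}|k_j-k_j(\rho)|$, moving one coordinate at a time; I describe the case in which the coordinate being changed lies above its centre, the coordinates below being handled symmetrically (e.g.\ after replacing $\rho$ by $\min_j k_j/E_j(x)$, so that all coordinates lie above the corresponding base value, and running the induction from that base upward). For $N(\mbf k)=0$, i.e.\ $\mbf k=\mbf k(\rho)$, the assertion is precisely the estimate established immediately before the statement of the lemma, namely Theorem~\ref{HalApp} fed with Selberg's Lemma~\ref{SELBERG} after choosing $\rho$ via Lemma~\ref{SIMUL}; in that case $|k_j-k_j(\rho)|=0$, and the extraneous term $\log^{-\pi^3\delta/2}x$ occurring there is $\ll E_j(x)^{-1/(n+1)}$ because $\delta\gg\min_l E_l(x)^{-1+\e}$.

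For the inductive step, fix $j_0$ with $k_{j_0}>k_{j_0}(\rho)$ and put $\mbf k'=\mbf k-\mbf e_{j_0}$ (so $k'_j=k_j$ for $j\ne j_0$ and $k'_{j_0}=k_{j_0}-1$). Differentiating $F(\mbf z;s)$ in $z_{j_0}$ gives $\partial_{z_{j_0}}F(\mbf z;s)=F(\mbf z;s)\sum_{p\in E_{j_0}}(p^s-1+z_{j_0})^{-1}$; since $f_{\mbf z}$ is strongly multiplicative, expanding $(p^s-1+z_{j_0})^{-1}=\sum_{a\ge1}(1-z_{j_0})^{a-1}p^{-as}$, comparing Dirichlet coefficients, summing over $m\le x$ and reading off the coefficient of $z_0^{k_0}\cdots z_n^{k_n}$ yields
\[
k_{j_0}\,\pi(x;\mbf E,\mbf k)=\sum_{\substack{p\in E_{j_0}\\ p\le x}}\pi(x/p;\mbf E,\mbf k')+R_{j_0}(x),
\]
where $R_{j_0}(x)$ collects the contributions of the prime powers $p^a$, $a\ge2$ (with their $(1-z_{j_0})^{a-1}$ weights). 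A crude Hardy--Ramanujan-type bound $\pi(y;\mbf E,\mbf l)\ll_n y\prod_j\tfrac{E_j(y)^{l_j}}{l_j!}e^{-E_j(y)}$, valid for all $y\ge3$ and all $\mbf l$ (using that $\{E_j\}$ partitions the primes, so $\prod_j e^{-E_j(y)}\asymp1/\log y$), then gives $|R_{j_0}(x)|\ll_n\tfrac{k_{j_0}}{E_{j_0}(x)}\,x\prod_j\tfrac{E_j(x)^{k_j}}{k_j!}e^{-E_j(x)}$, i.e.\ $R_{j_0}$ has relative size $\ll1/E_{j_0}(x)$.

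The main work is the evaluation of $S:=\sum_{p\in E_{j_0},\,p\le x}\pi(x/p;\mbf E,\mbf k')$. I would split the primes at $p=x^{1-c}$ for a fixed small $c>0$, so that $E_j(x)-E_j(x/p)\le\log\tfrac1{1-c}=O_c(1)$ when $p\le x^{1-c}$. On this range $\mbf k'$ is still admissible at the point $x/p$ --- indeed $k'_j/E_j(x/p)=\rho+O(1/E_j(x))=\rho+o\bigl(1/\log E_j(x/p)\bigr)$ --- and, since $k'_{j_0}<k_{j_0}$, the inductive hypothesis supplies the formula for $\pi(x/p;\mbf E,\mbf k')$; inserting it and summing over $p\le x^{1-c}$ is the classical Selberg--Delange recursion computation. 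Using Mertens' theorem and partial summation one shows
\[
\sum_{\substack{p\in E_{j_0}\\ p\le x}}\frac1p\prod_{j}\frac{E_j(x/p)^{k'_j}}{k'_j!}e^{-E_j(x/p)}=\frac{k_{j_0}}{E_{j_0}(x)}\prod_{j}\frac{E_j(x)^{k_j}}{k_j!}e^{-E_j(x)}\Bigl(1+O\bigl(1/E_{j_0}(x)\bigr)\Bigr),
\]
where the decoupling of the $n+1$ factors is legitimate precisely because every $\rho_j$ lies within $o(1/\log E_j(x))$ of the common $\rho$, so the product of Poisson weights behaves, to the required accuracy, like a single power, whose Laplace/saddle maximum sits exactly where $\mbf k'$ is balanced at $x/p$. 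For $x^{1-c}<p\le x$ one bounds $\pi(x/p;\mbf E,\mbf k')$ by the Hardy--Ramanujan estimate; writing $I(t):=1+t\log t-t\ge0$ for the Poisson rate function, the weight $\prod_j\tfrac{E_j(x/p)^{k'_j}}{k'_j!}e^{-E_j(x/p)}$ there is at most $(\log x)^{\,\rho-1+I(\rho)}=(\log x)^{\rho\log\rho}$ times its value on the range $p\le x^{1-c}$ when $\rho<1$ (and is even easier when $\rho\ge1$, there being then no ``balanced'' sub-range of such $p$), so the contribution of these primes is negligible next to $R_{j_0}$.

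Assembling, $k_{j_0}\pi(x;\mbf E,\mbf k)=k_{j_0}\cdot e^{-b(\rho-1)}\tfrac{F(\rho)}{\Gamma(\rho)}\,x\prod_j\tfrac{E_j(x)^{k_j}}{k_j!}e^{-E_j(x)}$ up to the asserted relative error, and dividing by $k_{j_0}$ gives the lemma for $\mbf k$. For the error accounting: the inductive hypothesis at $x/p$ contributes $\ll\tfrac{\log E_j(x/p)}{E_j(x/p)}\bigl(|k_j-k_j(\rho)|+O(1)\bigr)+(\delta_jE_j(x/p))^{-1/6}+E_j(x/p)^{-1/(n+1)}\ll R_j(x,\mbf\rho,\rho)$ (using $E_j(x/p)\asymp E_j(x)$ and $\tfrac{\log E_j}{E_j}\ll(\delta_jE_j)^{-1/6}$), while each application of the recurrence adds a fresh relative error $\ll1/E_{j_0}(x)$; over the $|k_{j_0}-k_{j_0}(\rho)|$ steps these accumulate to $\ll|k_{j_0}-k_{j_0}(\rho)|/E_{j_0}(x)=o\!\bigl(\tfrac{\log E_{j_0}(x)}{E_{j_0}(x)}|k_{j_0}-k_{j_0}(\rho)|\bigr)$ and so are absorbed, the Selberg--Delange and exceptional-prime errors being smaller still. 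The hard part, and the place where genuinely new work beyond the one-set case is required, is this Selberg--Delange evaluation of $S$ with the stated uniformity: tracking the variation of $E_j(x/p)$ with $p$ via Mertens' theorem, handling the saddle behaviour of the product of Poisson weights and the primes close to $x$, and above all decoupling the $n+1$ incommensurable factors using only $|\rho_j-\rho|=o(1/\log E_j(x))$, so that the answer is governed by the single constant $e^{-b(\rho-1)}F(\rho)/\Gamma(\rho)$ rather than by the unevaluable $M_{f_{\mbf\rho}}(x)$.
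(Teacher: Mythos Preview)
Your inductive scheme via the recursion
\[
k_{j_0}\,\pi(x;\mbf E,\mbf k)=\sum_{p\in E_{j_0}}\pi(x/p;\mbf E,\mbf k-\mbf e_{j_0})+(\text{prime--power corrections})
\]
is exactly the paper's approach, including the base case and the handling of the case $\rho_j<\rho$ by shifting the base value.  Two execution points differ and are worth flagging.

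First, the paper does \emph{not} split at $x^{1-c}$ but at the far smaller threshold $y:=\exp\!\bigl(\log x/E_j(x)\bigr)$.  For $p^{\nu}\le y$ one has $E_l(x)-E_l(x/p^{\nu})\ll 1/E_j(x)$ for every $l$, so each factor $\bigl(E_l(x/p^{\nu})/E_l(x)\bigr)^{k_l}$ and the Mertens factor $(1-\log p^{\nu}/\log x)^{-1}$ are $1+O(1/E_j(x))$, and the inner sum over $p^{\nu}\le y$ collapses to $E_j(y)=E_j(x)+O(\log E_j(x))$ with no analysis whatsoever.  The tail over $y<p^{\nu}\le x^{1/2}$ is then disposed of by the crude bound $\bigl(E_l(x/p^{\nu})/E_l(x)\bigr)^{k_l}\le1$, $(1-\log p^{\nu}/\log x)^{-1}\le2$, giving $\ll\log E_j(x)$.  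Thus the per-step relative error is $O(\log E_j(x)/E_j(x))$, and iterating $|k_j-k_j(\rho)|$ times produces \emph{exactly} the term $\tfrac{\log E_j(x)}{E_j(x)}|k_j-k_j(\rho)|$ in the statement --- it is not $o$ of that term.  Your sharper claim of $O(1/E_{j_0}(x))$ per step is not required for the lemma and is not justified by the sketch: with a cut at $x^{1-c}$ the weights $\prod_l(E_l(x/p)/E_l(x))^{k'_l}e^{\sum_l u_l}$ vary by a bounded but nonvanishing factor across the whole range, and a partial--summation evaluation would need control of $E_{j_0}(x^v)$ as a function of $v$ (plus the drift from $|\rho_j-\rho|$) that a merely ``nice'' partition does not supply.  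The paper's choice of $y$ sidesteps all of this.

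Second, your displayed identity is off by a factor $E_{j_0}(x)$: the right-hand side should be $k_{j_0}\prod_j\tfrac{E_j(x)^{k_j}}{k_j!}e^{-E_j(x)}$ (so that, after multiplying by $e^{-b(\rho-1)}F(\rho)/\Gamma(\rho)\,x$, one obtains $k_{j_0}$ times the asserted asymptotic for $\pi(x;\mbf E,\mbf k)$), not $\tfrac{k_{j_0}}{E_{j_0}(x)}\prod_j(\cdots)$.
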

\begin{proof}
Consider first the case that $\rho_j \geq \rho$ for each $j$. We shall prove this by induction on $k_j$ in each $j$, under the constraint $|k_j-k_j(\rho)| \ll E_j(x)/\log E_j(x)$. The base case, with $k_j(\rho)$ is verified immediately as a consequence of Theorem \ref{HalApp}. \\
%Let $j_0$ be such that $\min_j E_j(x) = E_{j_0}(x)$.  
Assume that the result holds for $k_j > k_j(\rho)$. Call the error term from the inductive hypothesis $r_{j,k_j}(x)$. Let $y := e^{\frac{\log x}{E_{j}(x)}}$.  Then, as we can select any of $k_j+1$ prime power factors from $E_j$ from the factorizations of integers counted by $\pi(x;\mbf{E},\mbf{k})$,
\begin{align}
&e^{b(\rho-1)}\Gamma(\rho)F(\rho)^{-1}\pi\left(x;\mbf{E},\mbf{k}+\mbf{e}_j\right) = \frac{e^{b(\rho-1)}\Gamma(\rho)F(\rho)^{-1}}{k_j+1}\left(\sum_{p^{\nu} \leq x^{\frac{1}{2}} \atop p \in E_j} \pi\left(x/p^{\nu};\mbf{E},\mbf{k}\right) +O\left(x^{\frac{1}{2}}\right)\right)\nonumber\\
&= \frac{x}{k_j+1}\left(\sum_{p^{\nu} \leq x^{\frac{1}{2}} \atop p \in E_j} \frac{1}{p^{\nu}}\prod_{0 \leq l \leq n} \frac{E_j\left(x/p^{\nu}\right)^{k_j}}{k_j!}e^{-E_j(x/p^{\nu})}\left(1+O\left(r_{j,k_j}\left(x^{\frac{1}{2}}\right)\right)\right) +O\left(x^{-\frac{1}{2}}\Gamma(\rho)F(\rho)^{-1}\right)\right) \nonumber\\
&= \frac{x}{k_j+1}(\prod_{0 \leq l \leq n}\left(1+O\left(r_{j,k_j}\left(x^{\frac{1}{2}}\right)\right)\right) \frac{E_l(x)^{k_l}}{k_l!}e^{-E_l(x)} \nonumber\\
&\cdot \left(\sum_{p^{\nu} \leq y \atop p \in E_j} + \sum_{y < p^{\nu} \leq x^{\frac{1}{2}}}\right)\prod_{0 \leq j \leq n} \left(1-\frac{E_l(x)-E_l(x/p^{\nu})}{E_l(x)}\right)^{k_l} \frac{\left(1+O\left(\frac{1}{\log x} \right)\right)}{\left(1-\frac{\log p^{\nu}}{\log x}\right)} +O\left(x^{-\frac{1}{2}}\Gamma(\rho)F(\rho)^{-1}\right))\nonumber\\
&= \frac{x}{k_j+1}\left(\prod_{0 \leq l \leq n}\left(1+O\left(r_{j,k_j}\left(x^{\frac{1}{2}}\right)+\log^{-1} x\right)\right) \frac{E_l(x)^{k_l}}{k_l!}e^{-E_l(x)} \left(\Sigma_1 + \Sigma_2\right) + O\left(x^{-\frac{1}{2}}\Gamma(\rho)F(\rho)^{-1}\right)\right)\label{DECOMP},
\end{align}
the second last estimate following because, in replacing $e^{-E_j(x/p^{\nu})}$ by $e^{-E_j(x)}$, we produce the term
\begin{equation*}
\exp\left(\sum_{0 \leq l \leq n} \left(E_l(x)-E_l(x/p^{\nu})\right)\right) = \frac{\log x}{\log x-\log p^{\nu}}+ O\left(\frac{1}{\log(x/p^{\nu})}\right),
\end{equation*}
with $p^{\nu} \leq x^{\frac{1}{2}}$. Here, we have set 
\begin{align*}
\Sigma_1 &:= \sum_{p^{\nu} \leq y \atop p \in E_j} \prod_{0 \leq l \leq n} \left(1-\frac{E_l(x)-E_l(x/p^{\nu})}{E_l(x)}\right)^{k_l}\left(1-\frac{\log p^{\nu}}{\log x}\right)^{-1}, \\
\Sigma_2 &:= \sum_{y < p^{\nu} \leq \sqrt{x} } \prod_{0 \leq l \leq n} \left(1-\frac{E_l(x)-E_l(x/p^{\nu})}{E_l(x)}\right)^{k_l}\left(1-\frac{\log p^{\nu}}{\log x}\right)^{-1}.
\end{align*}
%Let us evaluate $\Sigma_3$ first.  Let $C > 1$ be a large constant. Split the range of summation in $S_3$ into $[x^{\frac{1}{2}},x/C)$ and $[x/C,x]$. In the second sum, $x/p^{\nu} \leq C$, so let $C_0 := \max_l E_l(C)$. Then the summands are all at most 
%\begin{equation*}
%\frac{1}{p}\prod_{0 \leq l \leq n} \frac{C_0^{k_l}}{k_l!}e^{-C_0} \ll \prod_{0 \leq l \leq n} \left(k_l/C_0\right)^{-k_l},
%\end{equation*}
%so summing over $p^{\nu} \in [x/C,x]$ gives $O\left(\frac{1}{\log x}\prod_{0 \leq l \leq n} \left(k_l/C_0\right)^{-k_l}\right) = O\left(\frac{1}{\log x}\right)$. In the first range, we note that the map $t \mapsto \frac{t^k}{k!}e^{-t}$ is increasing if, and only if, $t > k$, and decreasing otherwise.  Hence, either $k_l < E_l(x/p)$ and $\frac{E_l(x/p^{\nu})^{k_l}}{k_l!}e^{-E_l(x/p^{\nu})} \leq \frac{E_l\left(x^{\frac{1}{2}}\right)^{k_l}}{k_l!}e^{-E_l\left(x^{\frac{1}{2}}\right)}$, or conversely, $\frac{E_l(x/p^{\nu})^{k_l}}{k_l!}e^{-E_l(x/p)} \leq \frac{E_l(x)^{k_l}}{k_l!}e^{-E_l(x)}$. Since $E_l\left(x^{\frac{1}{2}}\right) = E_l(x) + O(1) \leq E_l(x)$, it follows that the first case is also $\ll \frac{E_l(x)^{k_l}}{k_l!}e^{-E_l(x)}$.  Since $\sum_{x^{\frac{1}{2}} < p^{\nu} \leq x/C} \frac{1}{p} \leq \log 2$, it follows that
%\begin{equation*}
%\Sigma_3 \ll 1 + \frac{1}{\log x} \prod_{0 \leq l \leq n} \left(k_l/C_0\right)^{-k_l} \ll 1.
%\end{equation*}
Consider $\Sigma_1$. As $x/y = x^{1-\alpha}$ with $\alpha = \frac{1}{E_{j}(x)}$, we get 
\begin{align*}
\left(1-\frac{E_j(x)-E_j(x/p^{\nu})}{E_j(x)}\right)^{k_j} &\leq k_j\left(\frac{E_j(x)-E_j(x/y)}{E_j(x)}\right) \leq k_jE_j(x)^{-1}\sum_{x/y < p \leq x} \frac{1}{p} \\
&\leq k_j \frac{\left|\log\left(1-\frac{1}{E_{j}(x)}\right)\right|}{E_j(x)} \leq \frac{k_j}{E_{j}(x)^2}.
\end{align*}
Thus, as $\left(1-\frac{\log p^{\nu}}{\log x}\right)^{-1} = \left(1+O\left(\frac{\log y}{\log x}\right)\right) = 1+O\left(\frac{1}{E_{j}(x)}\right)$, which majorizes the remaining error terms, 
\begin{equation*}
\Sigma_1 = \left(1+O\left(\frac{1}{E_{j}(x)}\right)\right)\sum_{p^{\nu} \leq y \atop p \in E_j} \frac{1}{p^{\nu}} = \left(1+O\left(\frac{1}{E_{j}(x)}\right)\right)E_j(y) = \left(1+O\left(\frac{1}{E_{j}(x)} + \frac{\log E_{j}(x)}{E_j(x)}\right)\right)E_j(x),
\end{equation*}
this last estimate coming from $E_j(x) = E_j(y) + O\left(\log E_{j}(x)\right)$, and $\sum_{p \leq y \atop \nu \geq 2} \frac{1}{p^{\nu}} = O(1)$. \\
Next, for $\Sigma_2$, we have $\prod_{0 \leq j \leq n} \left(\frac{E_j(x/p)}{E_j(x)}\right)^{k_j} \leq 1$ and $1-\frac{\log p}{\log x} \geq \frac{1}{2}$, so that 
\begin{equation*}
\Sigma_2 \leq 2\sum_{y < p^{\nu} \leq x^{\frac{1}{2}} \atop p^{\nu} \in E_j} \frac{1}{p^{\nu}} \ll \log E_{j}(x),
\end{equation*}
as before.  All told, this implies that
\begin{equation*}
\Sigma_1+\Sigma_2 = \left(1+O\left(\frac{\log E_{j}(x)}{E_j(x)}\right)\right)E_j(x) + O\left(\log E_{j}(x)\right) = \left(1+O\left(\frac{\log E_j(x)}{E_j(x)}\right)\right)E_j(x).
\end{equation*}
Inserting this into \eqref{DECOMP}, we see that $\pi(x;\mbf{E},\mbf{k}+\mbf{e}_j) = \frac{E_j(x)}{k_j+1}\left(1+O\left(r_{j,k_j}(x) + \log^{-1} x + \frac{\log E_j(x)}{E_j(x)}\right)\right)\pi(x;\mbf{E},\mbf{k})$, so that $r_{j,k_j+1}(x) = \frac{\log E_j(x)}{E_j(x)}+r_{j,k_j}(x)$ is admissible. Iterating $k_j-k_j(\rho)$ for each $j$ gives the result when $\rho_j \geq \rho$ for each $j$.
In the case that $\rho_j < \rho$ for some $j$ then we may replace $\rho$ with any $E_j(x)^{-1+\e} \ll \rho' < \rho_j$. Then $|\rho'-\rho| = o(\min_l \log^{-1} E_l(x))$, so that the same inductive argument as above holds from $\rho'$ to $\rho_j$ for each $j$.\\
%Applying this argument for each $j$ completes the proof.
\end{proof}
%Recall that $F(\mbf{\rho}) := \prod_{0 \leq j \leq n} \prod_{p \in E_j} \left(1+\frac{\rho_j}{p-1}\right)\left(1-\frac{1}{p}\right)^{\rho_j}$ and $M(\mbf{\rho}) = \frac{1}{m}\sum_{1 \leq j \leq m} \rho_j$.
%
\begin{proof}[Proof of Theorem \ref{GOALTHM}]
We apply the estimates $F(\mbf{\rho}) = F(\rho)\left(1+O\left(\sum_{0 \leq j \leq n} |\rho_j-\rho|\right)\right)$ and 
\begin{equation*}
\Gamma(M(\mbf{\rho})+1) = \left(1+O\left(\sum_{0 \leq j \leq n} |\rho_j-\rho|\right)\right)\Gamma(\rho+1) = \rho\Gamma(\rho)\left(1+O\left(\sum_{0 \leq j \leq n} |\rho_j-\rho|\right)\right)
\end{equation*}
from Lemma \ref{PERTURB} in Lemma \ref{INDUCT}, with $\rho_j= \frac{k_j}{E_j(x)}$ as before. Noting that $|\rho-\rho_j| = \frac{1}{E_j(x)}|k_j-k_j(\rho)|$, it follows that
\begin{equation*}
\pi(x;\mbf{E},\mbf{k}) = \rho \frac{e^{-b(\rho-1)}F(\mbf{\rho})}{\Gamma(M(\mbf{\rho})+1)}x\prod_{0 \leq j \leq n} \frac{E_j(x)^{k_j}}{k_j!}e^{-E_j(x)} \left(1+O\left(\frac{\log E_j(x)}{E_j(x)}|k_j-k_j(\rho)| + (\delta_jE_j(x))^{-\frac{1}{6}}+E_j(x)^{-\frac{1}{n+1}}\right)\right).
\end{equation*}
%Shifting $\rho$ by $O\left(E_j(x)\right)$ for each $j$ allows us to replace $A(\rho)$ by 0 with an error of at most $\ll \max_{0 \leq j \leq n} E_j(x)^{-1}$ on the factor $\rho$ in the front. 
This completes the proof.
\end{proof}
\section*{Acknowledgements}
The author would like to thank his Ph.D supervisor Dr. J. Friedlander for his ample patience and encouragement during the period in which this paper was written.  
%A simple generalization, which we leave to the reader, of the proof of Theorem \ref{HalGen} ii) gives the following.
%\begin{lem}\ref{WIRSINGASYMPLEM}
%Suppose $\lambda$ and $g$ are as above with the additional constraint that $||g(p)|-\lambda(p)| < \chi$ for all primes $p$. Then
%\begin{equation*}
%M_{|g|}(x) = M_{\lambda}(x)\left(\exp\left(\sum_{p \leq x} \frac{|g(p)|-\lambda(p)}{p}\right) + O_{B,C}\left(\chi + e^{-\frac{1}{\sqrt{\chi}}}\right)\right).
%\end{equation*}
%\end{lem}
%This results in the following extension of Theorem \ref

\bibliographystyle{plain}
\bibliography{bibHalApp}
\end{document}